\setlist[description]{topsep=9pt,itemsep=6pt}
\DeclareMathOperator{\col}{Col}
\DeclareMathOperator{\com}{\mathscr{C}}
\DeclareMathOperator{\Ex}{Ex}
\numberwithin{equation}{section}
\theoremstyle{plain}
\newtheorem{theorem}{Theorem}
\newtheorem{proposition}{Proposition}[section]
\newtheorem{observation}[proposition]{Observation}
\newtheorem{assert}[proposition]{Assertion}
\theoremstyle{definition}
\newtheorem{definition}[proposition]{Definition}
\newtheorem{procedure}{Procedure}
\theoremstyle{remark}
\newcommand{\abs}[1]{\left\lvert#1\right\rvert}
\newcommand{\U}{\mathbf{\theta}}
\newcommand{\s}[2]{\sharp(#1\hookrightarrow#2)}
\newcommand{\sst}[2]{\left\{#1\,:\,#2\right\}}
\newcommand{\e}{\mathrm{e}^}
\def\BN{\mathbf N}
\def\T{\mathcal T}
\def\A{\mathcal A}
\def\a{\alpha}
\renewcommand{\le}{\leqslant}
\renewcommand{\ge}{\geqslant}
\renewcommand{\leq}{\leqslant}
\renewcommand{\geq}{\geqslant}
\begin{document}

\title[Isomorphism of Weighted Trees and Stanley's Isomorphism Conjecture]{Isomorphism of Weighted Trees
and Stanley's Isomorphism Conjecture for Caterpillars}

\author{Martin Loebl}
\address{Dept.~of Applied Mathematics\\
Charles University\\
Praha\\
Czech Republic.}
\email{loebl@kam.mff.cuni.cz}

\author{Jean-S\'ebastien Sereni}
\address{Centre National de la Recherche Scientifique\\
(CSTB)\\
Strasbourg\\
France.}
\email{sereni@kam.mff.cuni.cz}

\date{\today}
\thanks{This work was done within the scope of the International Associated Laboratory STRUCO}
\thanks{The authors were partially supported by the
Czech Science Foundation under the contract number~P202-13-21988S (M. L.) and
by P.H.C. Barrande~31231PF of the French M.A.E. (J.-S. S.)}

\begin{abstract} 
This paper contributes to a programme initiated by the first author:
`How much information about a graph is revealed in its Potts partition function?'.
We show that the $W$-polynomial distinguishes non-isomorphic weighted trees of
a \emph{good} family. The framework developed to do so also allows us to show that
the $W$-polynomial distinguishes non-isomorphic caterpillars. This establishes
Stanley's isomorphism conjecture for caterpillars, an extensively studied problem.
\end{abstract}

\subjclass[2010]{05C31, 05C60}
\keywords{$W$-polynomial, tree, graph reconstruction, graph isomorphism,
$U$-polynomial, Stanley's isomorphism conjecture, Potts partition function}

\maketitle

\section{Introduction}\label{sec:intro} 
Consider the following data set~$D(T)$ associated with a tree~$T$: for every integer~$n$ and
every partition~$P$ of~$n$, we are given the number of subsets~$X$ of edges
of~$T$ such that $P$ is equal to the multiset formed by the orders of the
components of~$T-X$. Note that this number is~$0$ if $n$ is not the number of
vertices of~$T$. Note also that if $P$ is composed of~$t$ integers, the
corresponding subsets~$X$, if any, all have cardinality~$t-1$.  For instance,
one can determine the number of vertices of~$T$ by checking, for each positive
integer~$n$, whether the trivial partition~$\{n\}$ returns a non-zero value
(which, necessarily, will be~$1$).  Once the number~$n$ of vertices of~$T$ is
known, the number of leaves of~$T$ is precisely the number returned by the
partition $\{n-1,1\}$, which corresponds to the number of edges~$e$
such that $T-e$ has one component of order~$1$.  The problem is to know whether
this information distinguishes non-isomorphic trees.  In other words, if~$T$
and~$T'$ are two trees such that $D(T)=D(T')$, is it true that necessarily~$T$
and~$T'$ are isomorphic?  That such a reconstruction is always possible was
suggested by different authors.  We note that there could be non-constructive
proofs of the statement. Thus it is a different (harder) problem to be able to
effectively recover the tree~$T$ from the knowledge of~$D(T)$. We explain in
subsections~\ref{sub.11},~\ref{sub.22} and~\ref{sub.33} why studying the strength of
the information contained in~$D(T)$ for an arbitrary tree~$T$ helps to
understand the strength of the partition function of the Potts model in
a magnetic field, for general graphs.

\subsection{State of the Art.}\label{sub.start}  
Extensive efforts were dedicated (personal communication with Noble) to proving
that $D(T)$ distinguishes non-isomorphic caterpillars ---  a \emph{caterpillar}
is a tree where all edges not incident with a leaf form a path, and a
\emph{leaf} is a vertex of degree one. Part of the Ph.D. thesis of
Zamora~\cite{Zam13} (under the supervision of M.~L.) is dedicated to this
problem. In addition, Aliste-Prieto and Zamora~\cite{AlZa14}, established the
statement restricted to the class of proper caterpillars: a caterpillar is
\emph{proper} if every vertex is a leaf or adjacent to a leaf.  Prior to that,
partial results had been obtained by Martin, Morin and~Wagner~\cite{MMW08} who
had established the statement for a subclass of proper caterpillars (where no
two non-leaf vertices are adjacent to the same number of leaves) and also to
the class of spiders, which is composed of all trees with a unique vertex of
degree greater than~two.  Other related results were obtained by Orellana
and Scott~\cite{OS}, Smith, Smith and Tian~\cite{SST} or can be found in the
undergraduate thesis by~Fougere~\cite{Fou03} and the MSc~thesis
by~Morin~\cite{Mor05}.  Finally, Hell and Ji~\cite{HJ18} have verified by computer that
Stanley's isomorphism conjecture~\cite{Sta95}, which we present in
Subsection~\ref{sub.22}, is true for trees with at most~$29$ vertices.
Previously, Russel has verified by computer that Stanley's
isomorphism conjecture is true for trees with at most~$25$ vertices (the code
is available at \url{https://github.com/keeler/csf}) and it was reported
(see~\cite[p.~238]{MMW08}) that Tan verified it for trees with at most~$23$
vertices.


\subsection{Main Contribution.}\label{sub.main} 
We solve affirmatively Stanley's isomorphism conjecture restricted to the class of caterpillars. 
We also investigate a weighted version of the problem, bearing in
mind its connections with graph polynomials, graph colouring and the Potts
model.  First we summarise the background and motivations. 

\section{Motivation}\label{s.mot} 
In this section we summarise the background (the Noble and Welsh conjecture and the Stanley conjecture) and describe our motivation.

\subsection{The Noble and Welsh Conjecture.}\label{sub.11} 
Motivated by the combinatorial aspects of the relationship between chord
diagrams and Vassiliev invariants of knots, Noble and Welsh~\cite{NoWe99}
introduced a polynomial of weighted graphs, the $W$-polynomial, which includes
several specialisations in combinatorics, such as the Tutte polynomial, the
matching polynomial (of ordinary graphs) and the polymatroid polynomial of
Oxley and Whittle~\cite{OxWh93}. We need to introduce some terminology to
define $W$.

A \emph{weighted graph} is a graph~$G=(V,E)$ together with a function~$w\colon
V\to\mathbf{Z}^+$.  The \emph{weight} of a subset $V'$ of vertices is
$w(V')\coloneqq\sum_{v\in V'}w(v)$. If $A\subseteq E$, we let~$c_V(A)$ be the
number of components of the graph $(V,A)$, where we may omit the subscript when
there is no risk of confusion.  Further, let~$n_1,\dotsc,n_{c(A)}$ be the
weights of the vertex sets of these components, listed in decreasing order:
$n_1\ge\dotsb\ge n_{c(A)}$.  We write~$x(A)$ to mean~$\prod_{i=1}^{c(A)}x_{n_i}$.  Let
\[
    W_G(z,x_1,x_2,\dotsc)\coloneqq\sum_{A\subseteq E}x(A){(z-1)}^{\abs{A}-\abs{V}+c(A)}.
\]
In particular, $W_G$ depends on~$z$ if and only if $G$ contains
a cycle~\cite[Proposition 5.1]{NoWe99}.  Unlike the Tutte polynomial, the
$W$-polynomial is $\# P$-hard to compute even for trees~\cite[Theorems~7.3
and~7.12]{NoWe99} and for complete graphs~\cite[Theorems~7.11
and~7.14]{NoWe99}.

In the case of unweighted graphs, which corresponds here to the weight
function~$w$ being identically~$1$, Noble and Welsh refer to the $W$-polynomial
as the \emph{$U$-polynomial}.  While computing~$W$ is hard for complete graphs,
Annan~\cite{Ann94} proved that $U_{K_n}(z,x_1,x_2,\dotsc)$ can be computed in
polynomial time, which is also the case for the Tutte polynomial.
However,~$U$ also exhibits differences with the Tutte polynomial: while finding
two non-isomorphic graphs with the same Tutte polynomial is easy, the same problem
is harder for~$U$. Brylawski~\cite{Bry81} found two non-isomorphic graphs with the same polychromate, 
and Sarmiento~\cite{Sar99} proved that the $U-$polynomial is equivalent to Brylawski's polychromate.
But the question remains open for trees: does the $U$-polynomial distinguishes
non-isomorphic trees? That this is the case became known as the \emph{Noble and
Welsh conjecture}.  This is clearly equivalent to our initial question:
`Does~$D(T)$ distinguish non-isomorphic trees?'

Noble and Welsh demonstrated the $U$-polynomial to be equivalent to the \emph{symmetric
function generalisation of the chromatic polynomial}, a function introduced
by Stanley~\cite{Sta95}.


\subsection{Stanley's isomorphism Conjecture.}\label{sub.22} 
To introduce Stanley's isomorphism conjecture let us first define graph
colouring.  A \emph{colouring} of a graph~$G=(V,E)$ is a mapping~$s\colon
V\to\BN^+$. We define~$b(s)$ to be the number of \emph{monochromatic edges}
in~$s$, that is, the number of edges~$uv$ such that $s(u)=s(v)$.  The mapping~$s$
is a \emph{$k$-colouring} if $s(V)\subseteq\{1,\dotsc,k\}$ and $s$ is
\emph{proper} if $b(s)=0$, that is, $s(u)\neq s(v)$ whenever $u$ and $v$ are
two adjacent vertices of~$G$.  We let $\col(G;k)$ be the set of proper
$k$-colourings of~$G$ and~$\col(G)$ be the set of all proper colourings of~$G$.

In the mid 1990s, Stanley~\cite{Sta95} introduced the \emph{symmetric function
generalization of the chromatic polynomial}, defined to be
\[
      X_G(x_1,x_2,\dotsc)\coloneqq\sum_{s\in\col(G)}\prod_{v\in V}x_{s(v)}.
\]
This is a homogeneous symmetric function in~$(x_1,x_2,\dotsc)$ of degree
$\abs{V}$.  As one might expect, $X_G$ does not distinguish non-isomorphic graphs:
there exist two non-isomorphic graphs on $5$ vertices with the same function
$X$. However, Stanley~\cite{Sta95} asked whether the polynomial $X_G$
distinguishes non-isomorphic trees.  The assertion that it does became known as
\emph{Stanley's isomorphism conjecture}. 

Further, Stanley~\cite{Sta98} later initiated the study of a common
generalisation of $X$ and the Tutte polynomial, namely the \emph{symmetric
function generalisation of the bad colouring polynomial}, defined for every
graph $G=(V,E)$ by
\[
    X_G(t,x_1,x_2,\dotsc)\coloneqq\sum_{s\colon V\to\BN^+}{(1+t)}^{b(s)}\prod_{v\in V}x_{s(v)}.
\]
Note that the sum runs over all colourings of~$G$, not only the proper ones.
Noble and Welsh~\cite[Theorem~6.2]{NoWe99} proved $X_G(t,x_1,x_2,\dotsc)$ to be
equivalent to the $U$-polynomial of~$G$.

\subsection{Loebl's Conjectures.}\label{sub.33} 
Loebl~\cite{Loe07} introduced the $q$-chromatic functions.
Let~$k\in\mathbf{N}$.   The \emph{$q$-chromatic function} of a
graph~$G=(V,E)$ is
\begin{equation}\label{e.poly1}
M_G(k,q)\coloneqq\sum_{s\in \col(G;k)} q^{\sum_{v\in V} s(v)}.
\end{equation}
It is known~\cite{Loe07} that
\[
    M_G(k,q)=\sum_{A\subset E} {(-1)}^{\abs{A}}\prod_{C\in \com(A)} {(k)}_{q^{\abs{C}}},
\]
where~the \emph{quantum integer}~${(k)}_r$ is~$r^{k-1}+\dotsb + r+ 1$
and~$\com(A)$ is the set of components of the spanning subgraph~$(V,A)$
while~$\abs{C}$ is the number of vertices in the component~$C$. Moreover Loebl
also introduced the \emph{$q$-dichromate}, defined as
\[
    B_G(x,y,q)\coloneqq\sum_{A\subset E} x^{\abs{A}} \prod_{C\in\com(A)} {(y)}_{q^{\abs{C}}}.
\]
Loebl~\cite{Loe07} conjectured the following.
\begin{itemize}
\item The $q$-dichromate is equivalent to the $U$-polynomial.
\item The $U$-polynomial distinguishes non-isomorphic chordal graphs.
\end{itemize}

There could be a close link between the latter conjecture and that of Stanley:
chordal graphs have a very distinguished tree structure.
Indeed, a folklore theorem~\cite{BLS99} states that
the class of chordal graphs is precisely the class of intersection graphs of subtrees of a tree,
that is, for each chordal graph~$G$, there exists a tree~$T$
and a mapping $f$ that assigns to each vertex of~$G$ a subtree~$T$ such that:
two vertices~$u$ and~$v$ of~$G$ are adjacent if and only if $f(u)\cap f(v)\neq\varnothing$.

The motivation for Loebl's conjectures is formula~\eqref{bpotts} below, which connects 
the $k$-state Potts model partition function and the $q$-dichromate.

\medskip

\textbf{The Potts model.}
We consider a standard model where magnetic materials are represented as lattices: vertices
are atoms and weighted edges are nearest-neighbourhood interactions. We assume that each
atom has one out of $k$ possible magnetic moments, for a fixed positive integer~$k$.
Thus we set~$S\coloneqq\{0,\dotsc,k-1\}$.
Every element of~$S$ is called a \emph{spin}.
A \emph{state} of a graph~$G=(V,E)$ is then an assignment of a single spin to each vertex
of~$G$, that is, a function~$s\colon V\rightarrow S$. 
We assume that all the coupling constants
(nearest-neighbourhood interactions) are equal to a constant~$J$.
For each state~$s$, the \emph{Potts model energy of the state~$s$} is defined
to be~$E(P^k)(s)\coloneqq\sum_{uv\in E}J\delta(s(u), s(v))$ where,
as is customary,~$\delta$ is the Kronecker delta function defined
by~$\delta(a,b)\coloneqq1$ if $a=b$ and $\delta(a,b)\coloneqq0$ otherwise.
The \emph{$k$-state Potts model partition function} is
\[
      \sum_{s:V\rightarrow S}M(s, J)e^{E(P^k)(s)}
\]
where $M(s,J)$ is a function describing the magnetic field contribution.

Loebl proved that for each real~$J$,
\begin{equation}\label{bpotts}
B_G(e^{J}-1,k,q)=\sum_{s:V\rightarrow S}q^{\sum_{v\in V}s(v)}e^{E(P^k)(s)}.
\end{equation}
This means that the $q$-dichromate  specializes to the $k$-state Potts model partition function
with a certain magnetic field contribution.

Recently a variant of the $q$-dichromate, $B_{r,G}(x,k,q)$, was
proposed by Klazar, Loebl and Moffatt~\cite{KLM14}:
\[
B_{r,G}(x,k,q)\coloneqq\sum_{A\subseteq E}  x^{\abs{A}}  \prod_{C\in
\com(A)}   \sum_{i=0}^{k-1}   r^{\abs{C} q^{i}}.
\]
They established that if $(k,r)\in\BN^2$ with $r>1$ and $x\coloneqq\e{\beta J}-1$,
then
\begin{equation}\label{e.p5a}
B_{r,G}(x,k,q) =  \sum_{\sigma \colon V\rightarrow S}
\e{\beta  \sum_{ uv \in E(G) }   J \delta(\sigma(u), \sigma(v)) }
r^{\sum_{v\in V} q^{\sigma(v)}}.
\end{equation}
Hence $B_{r,G}(x,k,q)$ is the $k$-state Potts model partition function with
magnetic field contribution $r^{\sum_{v\in V} q^{\sigma(v)}}$.
They also proved~$B_{r,G}$ to be equivalent to~$U_G$, which can be seen as a first
step towards Loebl's programme:
\medskip

\emph{The polynomial~$U_G$ is equivalent to the Potts partition function of~$G$ with a magnetic field contribution.}

\medskip

A well-known fact is that the
isomorphism problem for general graphs is equivalent to the isomorphism
problem restricted to chordal graphs: given a graph~$G= (V,E)$, consider the
chordal graph~$G'= (V', E')$ so that $V'\coloneqq V\cup E$ and $E'=
\binom{V}{2} \cup \sst{\{u,e\},\{v,e\}}{\{u,v\}=e \in E}$.  It clearly holds
that $G$ and~$H$ are isomorphic if and only if $G'$ and~$H'$ are isomorphic.
It thus seems particularly interesting to determine whether the $U$-polynomial
does distinguish non-isomorphic chordal graphs, as conjectured by~Loebl. If true,
we would obtain a surprising conclusion:
\medskip

\emph{The Potts partition function with a magnetic field contribution contains
essentially (modulo a simple preprocessing) all the information about the
underlying graph.}

\medskip

In that respect, it seems natural to study weighted trees. The tree mentioned in
the characterisation of the class of chordal graphs can be chosen to be
a \emph{clique-tree}, where the vertices of the tree are the maximal cliques of
the graph. Now, if $v$ is a vertex of a weighted tree with weight~$w(v)$, one can
think of~$v$ as a clique of order~$w(v)$, thus obtaining an unweighted chordal
graph.  This is what motivates working in the (seemingly harder) setting of
weighted trees.

\subsection{Main Results.}\label{sub.mres} 
Two weighted graphs are \emph{isomorphic} if there is an isomorphism of the graphs
that preserves the vertex weights.
We also consider weighted trees rooted at a vertex: an isomorphism between rooted weighted trees preserves the weights by definition, but may not preserve the roots. If it does preserve the roots, then it is an \emph{r-isomorphism}.
(In particular, two rooted weighted trees that are r-ismomorphic are isomorphic but the converse is not necessarily true.)

The first purpose of this work is to prove that
the $W$-polynomial distinguishes non-isomorphic \emph{weighted trees} when
restricting to collections of weighted trees satisfying some properties made
precise later. We call any such collection a \emph{good family}. We consider this result as
a first observation towards understanding Stanley's isomorphism conjecture for the class of chordal graphs;
even though we do not know natural examples of good families of weighted trees which were studied before.
We remark that the $W$-polynomial does not distinguish general weighted trees; a simple example consists of two paths with weight sequences $1,2,1,3,2$ and $1,3,2,1,2$.

Let~$(T,w)$ be a weighted tree. We write~$V(T)$ and~$E(T)$ for the vertex set
and the edge set of~$T$, respectively. We define~$\Ex(T)$ to be the multi-set
composed of all the vertex weights (with multiplicities) of~$T$. If $e\in
E(T)$, then $T-e$ is the disjoint union of two trees, which we consider to be
weighted and rooted at the endvertex of~$e$ that they contain. A rooted
weighted tree~$(S,w_S)$ is a \emph{shape} of~$(T,w)$ if
$2\le\abs{V(S)}\le\abs{V(T)}-2$ and there exists an edge~$e\in E(T)$ such that
$S$ is one of the two components of~$T-e$; moreover $w_S$ is the restriction
of~$w$ to the vertex set of~$S$.  We consider~$S$ to be rooted at the end-vertex
of~$e$.  We usually shorten the notation and write~$S$ for the shape~$(S,w_S)$.
In a tree, a vertex of degree one is called a \emph{leaf}.
\begin{definition}\label{def.good}
A set~$\T$ of weighted trees~$(T,w)$ is \emph{good} if it satisfies the following properties.
\begin{enumerate}[label=(\arabic*)]
  \item\label{good-struct} If a vertex of~$T$ is adjacent to a leaf, then all
     its neighbours but possibly one are leaves.
  \item\label{good-weileaf} If $v$ is a leaf or has a neighbour that is a
     leaf, then $w(v)=1$.
  \item\label{good-weishape} Let~$(T,w), (T',w')\in \T$ and let~$S$ be a shape
        of~$T$ and such that $w(S)\leq w(T)/2$.  Let~$S'$ be a shape of~$T'$
        such that $\Ex(S')=\Ex(S)$.  Then $S'$ and~$S$ are r-isomorphic.
\end{enumerate}
\end{definition}
\begin{theorem}\label{thm.mmm}
The $W$-polynomial distinguishes non-isomorphic weighted trees in any good set.
\end{theorem}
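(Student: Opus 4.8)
The plan is to induct on the number of vertices, using the centroid to root both trees canonically and then reconstructing the subtrees hanging at the centroid by invoking goodness. Throughout write $W\coloneqq\sum_{v}w(v)$ for the total weight and $n\coloneqq\abs{V(T)}$.

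\emph{Extracting the coarse invariants.} First I would observe that for a tree every $A\subseteq E(T)$ spans a forest, so $c(A)=n-\abs{A}$ and the exponent $\abs{A}-W+c(A)=n-W$ is \emph{constant}. Hence, up to the global factor $(z-1)^{n-W}$, the $U$-polynomial of $(T,w)$ carries exactly the information of the forest generating polynomial
\[
\Phi_{(T,w)}(x_1,x_2,\dots)\coloneqq\sum_{A\subseteq E(T)}\ \prod_{V_i\in\a_A}x_{w(V_i)},
\]
which records, for each way of deleting edges, the multiset of the total weights of the resulting pieces. The monomial of $\Phi$ with the largest number of factors comes from $A=\varnothing$ and equals $\prod_{v}x_{w(v)}$; the unique monomial with a single factor comes from $A=E(T)$ and equals $x_W$. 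Reading these off shows that $U_{(T,w)}=U_{(T',w')}$ already forces $(T,w)$ and $(T',w')$ to share the same $n$, the same $W$, and the same multiset of vertex weights; in particular the $z$-variable contributes nothing new, and the whole content lies in $\Phi$.

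\emph{The reconstruction scheme.} Root $T$ at its (weighted) centroid $c$, so that every component $S_1,\dots,S_d$ of $T-c$ is a shape with $w(S_i)\le W/2$; the degenerate single-vertex components and the centroid-edge case are trivial and are treated separately. Since the $S_i$ are vertex-disjoint and exhaust $V(T)\setminus\set{c}$, knowing the multiset $\set{S_1,\dots,S_d}$ up to isomorphism, together with $w(c)=W-\sum_i w(S_i)$, determines $(T,w)$ up to isomorphism. The centroid itself, and hence which shapes are the $S_i$, can be located purely from the single-edge weight-splits (the two-factor part of $\Phi$), so the reconstruction is canonical and matches for $T$ and $T'$ once the shape data agree. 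Finally, if $\Phi_{(T,w)}$ determines the multiset, over the light shapes of $T$, of their \emph{weight-multisets} (the multisets of their vertex weights), then Property~\ref{good-weishape} upgrades equal weight-multisets across $T$ and $T'$ to isomorphic shapes, whence the centroid shapes agree up to isomorphism and $T\cong T'$. Thus the theorem reduces to this single extraction claim.

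\emph{The main obstacle.} The crux is exactly this extraction, because $\Phi$ is a \emph{global} invariant: a monomial coming from an edge-set $A$ does not record \emph{which} edges were cut, so one cannot directly localise the cuts to one shape and read off its internal weights. The device that should make it possible is the rigidity of the periphery guaranteed by Properties~\ref{good-struct} and~\ref{good-weileaf}: leaves and their neighbours all have weight $1$ and are organised into pendant stars, each hanging by a single edge from the rest of the tree. I would exploit the multiplicativity of the rooted forest polynomial — writing, for a vertex $r$ with child-subtrees $R_1,\dots,R_m$, the identity $F_r(y)=y^{w(r)}\prod_{j}\bigl(\Phi_{R_j}+F_{R_j}(y)\bigr)$, where $F$ tracks the weight of the root component in the auxiliary variable $y$ and $\Phi_{(T,w)}=F_r(y)\big|_{y^k\to x_k}$ — to recognise and divide out the pendant-star contributions one layer at a time. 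Stripping a weight-$1$ pendant star is a controlled operation on $\Phi$ that both keeps us inside a good family and reduces $n$, so an induction then recovers the internal weight-multisets of the shapes. I expect this peeling/division step, together with the bookkeeping that the stripped trees remain good, to be the delicate part; the centroid reassembly described above is then comparatively routine.
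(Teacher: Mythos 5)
Your preliminary reductions are sound: for a tree the exponent of $z-1$ is the constant $n-W$, so $U_{(T,w)}$ carries exactly the census $\U(T,w,E)$ over all expressions $E$ (the paper makes the same observation); equal polynomials force equal order, total weight and vertex-weight multiset; Property~\ref{good-weishape} does upgrade equality of vertex-weight multisets of light shapes to isomorphism; and your centroid reassembly is a reasonable analogue of the paper's Observation~\ref{o.init}, \emph{granted} the shape census. But all of this merely restates the theorem. As you yourself note, everything hinges on the ``single extraction claim'' --- that the census $\U(T,w,E)$ determines, over the light shapes of $T$, the multiset of their vertex-weight multisets --- and that claim \emph{is} the hard content of Theorem~\ref{thm.mmm}. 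Your proposal does not prove it; it defers it to a hoped-for peeling/division argument, so the proof is incomplete precisely where the work begins.

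Moreover, the peeling device as sketched would fail for two concrete reasons. First, the identity $F_r(y)=y^{w(r)}\prod_{j}\bigl(\Phi_{R_j}+F_{R_j}(y)\bigr)$ lives at the level of the \emph{rooted} polynomial, which records which component contains the root; the U-polynomial only yields the specialisation $y^k\mapsto x_k$, and this substitution is not multiplicative ($x_1\cdot x_1\neq x_2$), so the product structure is destroyed and there is no well-defined way to ``divide out'' a pendant star's factor from the unrooted invariant without already possessing rooted information that $U$ does not supply. Second, stripping pendant stars does not keep you in the good set: the attachment vertex becomes a leaf whose weight is unconstrained, violating Property~\ref{good-weileaf}; more fundamentally, goodness --- in particular Property~\ref{good-weishape} --- is a hypothesis on a \emph{fixed} set $\T$ of trees, not a hereditary property preserved under deleting stars, so the inductive hypothesis cannot legitimately be invoked for the stripped trees. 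The paper circumvents both problems by never manipulating the polynomial algebraically: it inducts on the shape weight $j$, computes from $\U(T,w,E)$ and the already-known counts of smaller shapes the number of \emph{shaped} $j$-partitions of each characteristic (Procedure~\ref{proc:1}, realised via occurrences of situations, inclusion--exclusion and the arborescence computation of Section~\ref{sec:P1P2}), and then converts shaped-partition counts into shape counts through minimal valid expressions, using Property~\ref{good-weishape} only to know that a weight-multiset singles out one Shape. That counting machinery is exactly what your proposal is missing, and without it (or a worked-out substitute) the argument does not go through.
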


Our proof of Theorem~\ref{thm.mmm} is not constructive in the sense that we are
not able to reconstruct the weighted tree~$(T, w)$ from~$W_{(T,w)}$. The
difficulty in proving the theorem is that while the main defining property of
a good family is about shapes, the $W$-polynomial does not ``see'' shapes.

\medskip\noindent

However, shapes turn out to be a useful and rather powerful notion: it
allowed us to unlock the case of general caterpillars, thereby confirming
Stanley's isomorphism conjecture for the class of (general) caterpillars.
\begin{theorem}\label{thm.mmm1}
Each caterpillar can be reconstructed from its $U$-polynomial.
\end{theorem}

\noindent
Note that Theorem~\ref{thm.mmm1}, contrary to Theorem~\ref{thm.mmm}, allows for
a full reconstruction of the tree.


\section{The Structure of the Proofs} 

We write down a procedure and with its help prove both theorems. The rest of
the paper then describes our realisation of the procedure.  We fix a good set
of weighted trees and, from now on, we say that a weighted tree is \emph{good}
if it belongs to this set. 

A \emph{$j$-form} is an r-isomorphism class of rooted weighted trees with total
weight $j$. Thus a $j$-form~$F$ is a collection of r-isomorphic rooted weighted trees and,
viewing a shape of a tree~$T$ as a rooted weighted tree, a shape can belong to
a~$j$-form. Note in particular that two shapes~$S$ and~$S'$ of a weighted tree belong to the same $j$-form for some $j$
if and only if $S$ and~$S'$ are r-isomorphic.
We start with two observations.

\begin{observation}\label{obs:step2}
      Let~$T_1$ and~$T_2$ be two shapes of a tree~$T$ such that $w(T_1)+ w(T_2)\leq w(T)$.
      Then either $T_1\subseteq T_2$ or $T_2\subseteq T_1$ or $T_1\cap T_2=\varnothing$.
\end{observation}
\begin{proof}
For~$k\in\{1,2\}$ let~$e_k$ be the edge of~$T$ associated to~$T_k$, that is,
$T_k$ is a component of~$T-e_k$. If~$e_1=e_2$, then either~$T_1=T_2$ or~$T_1\cap T_2  = \varnothing$.
Assume that $e_1\neq e_2$. Then either $e_2\in E(T_1)$ or $e_2\in E(T-T_1)$. If $e_2\in E(T-T_1)$,
then either $T_1\subseteq T_2$ or $T_2\subseteq T-T_1$ in which case $T_1\cap T_2=\varnothing$.
If $e_2\in E(T_1)$, then $T_2\subseteq T_1$: otherwise, $T_1\cap T_2\neq\varnothing$ and $T-T_1\subseteq T_2$,
    so that $w(T_1)+w(T_2)>w(T)$, contrary to the assumption.
\end{proof}
\begin{observation}\label{o.init}
Let~$(T,w)$ be a weighted tree such that every leaf
      has weight~$1$. Assume that
      we know the total weight~$w(T)$ of~$T$ and that,
      for each~$j\leq w(T)/2$ and each $j$-form~$F$, we know the number of
      shapes of~$(T,w)$ that belong to~$F$.  Then we know~$T$.
\end{observation}
\begin{proof} We use Observation~\ref{obs:step2}.  We order
      the shapes of~$(T,w)$ of weight at most~$w(T)/2$ decreasingly according
      to their weights.  Let~$m$ be the maximum weight of such a shape of~$T$
      and let~$S_1, \dotsc, S_a$ be the shapes with weight~$m$. Note that we
      know precisely these~$a$ trees.  In addition, either the
      shapes~$S_1,\dotsc,S_a$ are joined in~$T$ to the same vertex, or~$a=2$
      and~$m=w(T)/2$.  In the latter case ($m=w(T)/2$) we know that $T$
      consists of the two weighted rooted trees~$S_1$ and~$S_2$ (each of
      weight~$m$) with an edge between their roots: this ends the proof for this case.
      Assume that $m<w(T)/2$. We let~$r$ be the additional vertex to which we link
      each of~$S_1,\dotsc,S_a$.

      We show by descending induction on~$j\in\{2,\dotsc,m\}$ that we know the
      subtree of~$T$ induced by all shapes of~$T$ with weight
      in~$\{j,\dotsc,\lfloor W(T)/2\rfloor\}$. The induction has thus been
      initialized above, so assume that~$j\leq m-1$. Let~$S_1,\dotsc,S_t$ be
      the shapes of~$T$ with weight in~$\{j+1,\dotsc,\lfloor W(T)/2\rfloor\}$.
      Note that we know, in particular, each of these~$t$ trees.  The shapes
      of~$T$ of weight equal to~$j$, if any, are either shapes of~$S_1, \dotsc,
      S_t$ or joined to~$r$ by an edge from their root. Fix a~$j$-form~$F$.
      Since we do know the total number of shapes belonging to~$F$ and
      contained in each of~$S_1,\dotsc,S_t$ (because we know precisely those
      subtrees), we can deduce the number of shapes that belong to~$F$ and are
      attached to~$r$. As this argument applies to all~$j$-forms~$F$, we infer
      that we know the subtree of~$T$ formed by all shapes with weight
      contained in~$\{j,\dotsc,\lfloor w(T)/2\rfloor\}$. The reconstruction
      of~$T$ is almost finished: letting $w_0$ be the total weight of the tree
      we built so far, it only remains to add $w(T)-w_0$ new leaves, each joined
      to the vertex~$r$.  This concludes the proof.
\end{proof}

Let~$(T,w)$ be a weighted tree.
Let~$\alpha(T)=(\alpha_1,\dotsc,\alpha_n)$ be the weights of the shapes of~$T$,
with $\alpha_1<\dotsb<\alpha_n$. The definition of a shape implies that
$\alpha_1\ge 2$.  

We shall consider \emph{connected partitions} of the tree~$T$, \emph{i.e.}, partitions
of the vertex set of~$T$ into connected subsets. Later in the paper we refer to
connected partitions of~$T$ simply as partitions of~$T$.  We shall also
consider the partitions of the integer~$w(T)$.  To distinguish between them
clearly, partitions of an integer are referred to as \emph{expressions}.  For
each partition~$P$ of~$T$, the weights of the parts of~$T$ form an expression
of~$w(T)$, which we call the \emph{characteristic} of~$P$.
\begin{itemize}
      \item A \emph{$j$-expression of an integer~$m$} is a partition of~$m$ where
          one of the parts is equal to~$m-j$.
       \item For~$i\in\{1,\dotsc,\ell\}$, let~$m_i$ be an integer and~$E_i$ an expression of~$m_i$.
             We define~$[E_1,\dotsc, E_\ell]$ to be the expression
            of~$\sum_{i=1}^\ell m_i$ equal to the concatenation of~$E_1,\dotsc,E_{\ell}$.
          In particular, if $S$ is a shape of~$T$ with weight~$\alpha_j$,
          then $[\Ex(S),w(T)-\a_j]$ is an $\a_j$-expression of~$w(T)$.
    \item A \emph{$j$-partition} of~$T$ is a partition of~$T$ whose
       characteristic is a $j$-expression of~$w(T)$. In other words,
            one of the components of the partition has weight~$w(T)-j$.
    \item A $j$-partition~$(T_0,\dotsc,T_k)$ of~$T$ with~$w(T_0)=w(T)-j$ is
          \emph{shaped} if there exists an edge~$e$ of~$T$ such that $T_0$ is
          one of the components of~$T-e$. Any such edge~$e$ is then \emph{associated}
          to~$(T_0,\dotsc,T_k)$.
    \item If $S$ is a shape of~$T$ with weight~$\alpha_j$ and vertex
          set~$V(S)=\{v_1,\dotsc,v_s\}$, we define $P(S)$ to be~$(V(T)\setminus
          V(S),\{v_1\},\dotsc,\{v_s\})$, which is a shaped $\a_j$-partition of~$T$.
\end{itemize}
For an expression~$E$ of a positive integer, we let~$\U(T,w,E)$ be the number
of partitions of~$(T,w)$ with characteristic~$E$. Note that this number is~$0$
if $E$ is not an expression of~$w(T)$.  We note that there is a bijection between connected partitions and 
edge subsets given by taking all edges of $T$ joining two vertices in different blocks of the connected partition
and thence~$\U(T,w,E)$ turns out to be the coefficient of $x_E$ in the W-polynomial of $(T,w)$.

We note
that among the partitions of~$T$  corresponding to a given expression, some are
shaped and others are not. If all the vertex weights are equal to one, we abbreviate~$\U(T,w,E)$
as~$\U(T,E)$. The proofs of both theorems rely on the following procedure.
\begin{procedure}\label{proc:1}\mbox{}\\
\textsc{input:} The polynomial~$W_{(T,w)}$; an
      integer~$j\in\{\a_2,\dotsc,\a_{\ell}\}$, where~$\ell$
      is the least integer~$i$ such that $\a_i> w(T)/2$; a $j$-expression~$E$ of $w(T)$ and, for
      each~$j'<j$ and each~$j'$-form~$F$, the number of shapes~$S$ of~$T$ that
      are isomorphic to a member of~$F$ (hence, according to the notation introduced above,
      possibly but not necessarily r-isomorphic, and hence not necessarily a member
      of~$F$).

\noindent
      \textsc{output:} The number of shaped $j$-partitions of~$T$ with characteristic~$[w(T)-j,E]$.
\end{procedure}
\noindent
Let us see how this procedure allows us to establish Theorem~\ref{thm.mmm}.
\begin{proof}[Proof of Theorem~\ref{thm.mmm}]
Fix two good weighted trees~$(T,w)$ and~$(T',w')$ with $W_{(T,w)}=W_{(T',w')}$.  By
Observation~\ref{o.init},  $(T,w)$ and~$(T',w')$ are isomorphic if $w(T)=
w'(T')$ and for each $j$-form~$F$ where~$j\leq w(T)/2$, the numbers of shapes
of~$T$ and of~$T'$ that belong to~$F$ are equal. To establish this, first note
that the vector $\alpha(T)=(\alpha_1,\dotsc,\alpha_n)$ can be computed
from~$W_{(T,w)}$, since the coordinates correspond to the partitions of~$T$
into two subtrees (each with at least two vertices). Thus
$\alpha(T')=\alpha(T)$.

We prove by induction on~$j\in\{\alpha_1,\dotsc,\lfloor w(T)/2\rfloor\}$ that
for every $j$-form~$F$, the numbers of shapes of~$T$ and of~$T'$ that belong
to~$F$ are the same. So suppose first, as the base case of the induction, that $j=\alpha_1$.  Recall that
$\alpha_1\ge2$.  Furthermore, a shape~$S$ of~$T$ or~$T'$ belongs to an
$\a_1$-form if and only if~$S$ is the star on~$\alpha_1$ vertices rooted at its
centre. This is because the leaves and their neighbours have weight~$1$. It
follows that the number of shapes of~$T$ of weight~$\alpha_1$ can be calculated
from~$W_{(T,w)}$ and thus this number is the same for~$(T',w')$.

Now we establish the induction step. 
For convenience, if $F$ is a $j$-form, let~$n_T(F)$ be the number of shapes
      of~$T$ that belong to~$F$; we use a similar notation for~$T'$.
Let~$j\in\{\a_1+1,\dotsc,\lfloor w(T)/2\rfloor\}$. The induction hypothesis
      is that $n_T(F')=n_{T'}(F')$ for every $j'$-form~$F'$ and every $j'<j$.
      We want to establish that
\begin{equation}\label{eq-t1}      
\text{for every $j$-form~$F$},\quad n_T(F)=n_{T'}(F).
\end{equation}
This will prove Theorem~\ref{thm.mmm} by Observation~\ref{o.init}.

We first set a partial order on the $j$-forms, which allows us to link
tree partitions with $j$-forms.  Given a $j$-form~$F$, we define~$\Ex(F)$ to
be~$\Ex(f)$ for an arbitrary representative~$f$ of~$F$.  (This definition is
valid, since all representatives of a $j$-form are r-isomorphic rooted weighted
trees.) A $j$-form $F'$ is \emph{smaller than} a $j$-form~$F$ if $\Ex(F')$ is
a proper refinement of~$\Ex(F)$. If $P=(T_0,\dotsc,T_k)$ is a shaped
$j$-partition of~$T$ where $w(T_0)=w(T)-j$, we define $S(P)$ to be the shape
of~$T$ formed by the union of all parts of~$T$ different from~$T_0$, that is,
      $S(P)\coloneqq\cup_{i=1}^k T_i=T-T_0$, rooted at the
      end-vertex of the edge associated to~$P$.

A key observation is that if~$P$ is a shaped $j$-partition of~$T$ with
characteristic~$[\Ex(F), w(T)- j]$ for some $j$-form~$F$, then $\Ex(S(P))$ is a refinement
of~$\Ex(F)$, possibly equal to~$\Ex(F)$.

We prove~\eqref{eq-t1} by induction on the $j$-form~$F$ considered (with respect to the
partial order defined above).  

We first deal with the case where~$T$ has no
shape that belongs to a $j$-form~$F'$ such that $\Ex(F')$ is a proper refinement
      of~$\Ex(F)$. We demonstrate the following assertion.

\begin{assert}\label{as:1}
If~$T$ has no $j$-form~$F'$ such that
$\Ex(F')$ is a proper refinement of~$\Ex(F)$, then the
number of shaped $j$-partitions of~$T$ with characteristic~$[\Ex(F), w(T)- j]$ is
equal to~$n_T(F)$.
\end{assert}

This assertion implies that $n_T(F)=n_{T'}(F)$ since by Procedure~\ref{proc:1} and by
the induction hypothesis,
the number of shaped $j$-partitions of~$T$ with characteristic~$[\Ex(F), w(T)- j]$ is
equal to the number of shaped $j$-partitions of~$T'$ with characteristic~$[\Ex(F), w(T)- j]$.

To establish Assertion~\ref{as:1}, we first note that
each shape of~$T$ that belongs to~$F$ provides exactly one shaped $j$-partition of~$T$
with characteristic~$[\Ex(F), w(T)- j]$. On the other hand,
if $P$ is a shaped $j$-partition of~$T$ with
characteristic~$[\Ex(F), w(T)- j]$, then $\Ex(S(P))$ is a refinement of~$\Ex(F)$, which by our
hypothesis on~$F$ must be equal to~$\Ex(F)$. So~$S(P)$ gives rise to precisely
one shaped $j$-partition of~$T$ with characteristic~$[\Ex(F), w(T)- j]$, namely~$P$.
As $\Ex(F)=\Ex(S(P))$, it follows from Definition~\ref{def.good}\ref{good-weishape} that~$S(P)$ belongs to~$F$,
which ends the proof of Assertion~\ref{as:1}.

In the induction step we assume that $n_T(F')=n_{T'}(F')$ for every
$j$-form~$F'$ such that $\Ex(F')$ is a proper refinement of~$\Ex(F)$. Observe that
for each $j$-form~$F'$ with $F'<F$, each shape of~$T$ that belongs to~$F'$
gives rise to a certain number of shaped $j$-partitions of~$T$ with
characteristic~$\Ex(F)$, and this number depends only on~$F'$. Thus the number~$n'_T(F)$
of shaped $j$-partitions of~$T$ with characteristic~$[\Ex(F), w(T)- j]$ such that
$\Ex(S(P))$ is a proper refinement of $\Ex(F)$ depends only on the multi-set~$\sst{n_T(F')}{F'<F}$. As
$\sst{n_T(F')}{F'<F}=\sst{n_{T'}(F')}{F'<F}$, we deduce that $n'_T(F)= n'_{T'}(F)$.
We demonstrate the following assertion.

\begin{assert}\label{as:2}
The number of shaped $j$-partitions of~$T$ with characteristic~$[\Ex(F), w(T)- j]$ is
equal to~$n'_T(F)+ n_T(F)$.
\end{assert}

This assertion follows analogously as Assertion~\ref{as:1}.
Moreover, we established in the paragraph above that $n'(T,F)= n'(T',F)$.
Since the number of shaped partitions of~$T$ with characteristic~$[Ex(F),w(T)-j]$ is equal
to the number of shaped partitions of~$T'$ with characteristic~$[Ex(F),w(T)-j]$
by Procedure~\ref{proc:1}, we deduce that $n_T(F)= n_{T'}(F)$ by Assertion~\ref{as:2}.
This establishes~\eqref{eq-t1}, and hence finishes the proof of Theorem~\ref{thm.mmm}.
\end{proof}

As we see next, the notion of a shape and Procedure~\ref{proc:1} turn out to be
essential tools to study Stanley's isomorphism conjecture restricted to caterpillars.

\section{Caterpillars}\label{sec:cater} 
We first observe that Theorem~\ref{thm.mmm1} is true for all caterpillars with at most two vertices. Hence we will assume that a caterpillar has at least three vertices in this section, and we only consider weights to be~$1$; since there is then no risk
of confusion, we abbreviate~$\abs{V(T)}$ to~$\abs{T}$ for every tree~$T$.  Let~$T$
be a caterpillar (with at least three vertices).  The \emph{spine} of~$T$ is the
unique path~$P$ of~$T$ such that every leaf of~$T$ is at distance exactly one from
a vertex of~$P$.

Before proving Theorem~\ref{thm.mmm1}, we formalize a simple but crucial
observation, which is used repeatedly and implicitly in the proof of
Theorem~\ref{thm.mmm1}.  
\begin{observation}\label{obs:scru} Every shape of a caterpillar~$T$ is rooted at
      a vertex of the spine of~$T$.
\end{observation}
 It follows from~Observation~\ref{obs:scru} that for every
      integer~$j$, the number of shapes of~$T$ with~$j$ vertices belongs to~$\{0,1,2\}$.

\noindent   If $T$ is a caterpillar, and $E$ is an expression of~$j$ so that no
part of~$E$ is equal to~$\abs T-j$, then we define~$\U_s(T,E)$ to be the number of
\textbf{shaped} $j$-partitions of~$T$ with characteristic~$[\abs{T}-j, E]$.  
 Let~$S_k$ be the star on~$k$
      vertices --- thus $S_1$ is a single vertex.  We always consider a star to be
      rooted at its center.
      If $T$ is a rooted tree then we define~$S_k\rightarrow T$ to be the tree
      rooted at the center of~$S_k$ and obtained by joining the root of~$T$ to
      that of~$S_k$ by an edge.  Hence if $T$ is a rooted caterpillar, then
      $S_k\rightarrow T$ is also a rooted caterpillar. 
            
      \medskip\noindent
      
Let~$\A$ be the collection of rooted caterpillars~$A$ such that
\begin{itemize}
      \item $A$ is a single vertex; or
      \item $A$ is a rooted edge; or
      \item $\abs{A}\ge3$ and the root of~$A$ is either an end-vertex of the spine or
            a leaf attached to an end-vertex of the spine.
\end{itemize}
If $A\in \A$ then the \emph{reverse~$\tilde{A}$} of~$A$ is defined as follows. If $A$ is a single
vertex then $\tilde{A}\coloneqq A$. If $A$ is a rooted edge then $\tilde{A}$ is the same edge
rooted at the other end-vertex.  If $A$
has at least three vertices and the root is an end-vertex of the spine then $\tilde{A}$
is obtained from~$A$ by resetting the root at the other end-vertex of the spine. If
$A$ has at least three vertices and the root is  a leaf attached to an end-vertex
of the spine then $\tilde{A}$ is obtained from~$A$ by resetting the root at an arbitrary
leaf attached to the other end-vertex of the spine. (We note that such a leaf always
exists by the definition of the spine.)

\medskip
 
\begin{observation}\label{obs:new1}
      Let $A, B\in \A$ such that~$A$ and~$B$ are isomorphic but not r-isomorphic.
Let~$o$, $o_1$ and~$o_2$ be positive integers.
\begin{enumerate}
      \item The caterpillars~$S_o\rightarrow A$ and~$S_o\rightarrow B$ are not
            isomorphic; and\label{it:new1}
      \item neither are the caterpillars~$S_{o_2}\rightarrow S_{o_1}\rightarrow A$
            and~$S_{o_2}\rightarrow S_{o_1}\rightarrow B$.\label{it:new2}
\end{enumerate}
\end{observation}
\begin{proof}
The statements are vacuously true if $\abs{A}\le2$, so we assume that $A$ has at least three
vertices --- and thus so has~$B$. Given an element $C\in\A$ with~$\abs{C}\ge3$, we
let~$r_C$ be the root of~$C$ and we define the degree sequence~$s_C$ of~$C$ as
follows.~Let $w_1\dotso w_t$ be the spine of~$C$, where~$w_1$ is closest to~$r_C$.
The degree sequence of~$C$ is $s_C\coloneqq(\deg(w_1),\dotsc,\deg(w_t))$.  The
\emph{reverse} of~$s_C$ is then the sequence~$(\deg(w_t),\dotsc,\deg(w_1))$.  We
observe that two elements~$C$ and~$C'$ of~$\A$ (with at least three vertices) are
isomorphic if and only if $s_C=s_{C'}$ or $s_{C'}$ is the
reverse of~$s_C$. Furthermore,~$C$ and~$C'$ are r-isomorphic
if and only if $s_C=s_{C'}$ and $\deg(r_C)=\deg(r_{C'})$ (that is, either both roots
have degree one, or both roots have degree greater than one).

Let us make another preliminary remark. If $\deg_A(r_A)=1\neq\deg_B(r_B)$, then in
each of~\eqref{it:new1} and~\eqref{it:new2} the caterpillars obtained from~$A$ and
from~$B$ have spines of different lengths, so they are not isomorphic.  We can
thus assume that either both of~$r_A$ and~$r_B$ have degree one, or both have
degree greater than one.  This implies that $s_A\neq s_B$ and~$t>1$, as otherwise $A$
and~$B$ would be r-isomorphic. Consequently, $s_B$ is the reverse
of~$s_A$. Let us write~$s_A=(a_1,\dotsc,a_t)$.

\eqref{it:new1}. For convenience, set~$A'\coloneqq S_o\rightarrow A$
and~$B'\coloneqq S_o\rightarrow B$.  We know that $s_B=(a_t,\dotsc,a_1)\neq s_A$.
Suppose first that $\deg_A(r_A)=1=\deg_B(r_B)$. Then $s_{A'}=(o,2,a_1,\dotsc,a_t)$
if $o>1$ while $s_{A'}=(2,a_1,\dotsc,a_t)$ if $o=1$.  Similarly,
$s_{B'}=(o,2,a_t,\dotsc,a_1)$ if $o>1$ while $s_{B'}=(2,a_t,\dotsc,a_1)$ if $o=1$.
In either case, we see that $s_{A'}\neq s_{B'}$ as $s_A\neq s_B$.  So suppose for
a contradiction that $s_{B'}$ is the reverse of~$s_{A'}$.  In the former case,
\emph{i.e.},~$o>1$, this means that $(o,2,a_1,\dotsc,a_t)=(a_1,\dotsc,a_t,2,o)$.  Then
$a_j=o$ for~$j$ odd and $a_j=2$ for~$j$ even. In
addition, $a_t=o$ and $a_{t-1}=2$, showing that~$t$ must be odd unless~$o=2$.
However, either way this yields that $s_A=s_B$, a contradiction.  In the latter
    case, \emph{i.e.},~$o=1$, we have $(2,a_1,\dotsc,a_t)=(a_1,\dotsc,a_t,2)$, so~$a_i=2$ for
each~$i\in\{1,\dotsc,t\}$ which again contradicts that $s_A\neq s_B$.

It remains to deal with the case where $\deg_A(r_A)\neq1\neq\deg_B(r_B)$.
    If~$o>1$, then $s_{A'}=(o,1+a_1,a_2,\dotsc,a_t)$
    and~$s_{B'}=(o,1+a_t,a_{t-1},\dotsc,a_1)$.  If~$o=1$, then
    $s_{A'}=(1+a_{1},a_2,\dotsc,a_t)$ and~$s_{B'}=(1+a_t,a_{t-1},\dotsc,a_1)$.
    In either case, note that $s_{A'}\neq s_{B'}$ because $s_A\neq s_B$.
    Further, if~$s_{B'}$ is the reverse of~$s_{A'}$, then it implies that
    $o>1$, $a_t=o=a_1$ and $a_i=o+1$ for~$i\in\{2,\dotsc,t-1\}$, leading
    to~$s_A=s_B$, a contradiction.  This ends the proof of~\eqref{it:new1}.

\eqref{it:new2}. For convenience, set $A'\coloneqq S_{o_2}\rightarrow
S_{o_1}\rightarrow A$ and~$B'\coloneqq S_{o_2}\rightarrow S_{o_1}\rightarrow B$.
Assume first that $\deg_A(r_A)=1=\deg_B(r_B)$. Then we infer as before that
\[
      s_{A'}=\begin{cases}
            (2,2,a_1,\dotsc,a_t)&\quad\text{if $o_1=1$ and $o_2=1$},\\
            (o_2,2,2,a_1,\dotsc,a_t)&\quad\text{if $o_1=1$ and $o_2>1$},\\
            (1+o_1,2,a_1,\dotsc,a_t)&\quad\text{if $o_1>1$ and $o_2=1$},\\
            (o_2,1+o_1,2,a_1,\dotsc,a_t)&\quad\text{if $o_1>1$ and $o_2>1$}.\\
      \end{cases}
\]
and
\[
      s_{B'}=\begin{cases}
            (2,2,a_t,\dotsc,a_1)&\quad\text{if $o_1=1$ and $o_2=1$},\\
            (o_2,2,2,a_t,\dotsc,a_1)&\quad\text{if $o_1=1$ and $o_2>1$},\\
            (1+o_1,2,a_t,\dotsc,a_1)&\quad\text{if $o_1>1$ and $o_2=1$},\\
            (o_2,1+o_1,2,a_t,\dotsc,a_1)&\quad\text{if $o_1>1$ and $o_2>1$}.\\
      \end{cases}
\]
We see that in each of the four possible cases $s_{A'}\neq s_{B'}$ as $s_A\neq
s_B$. In addition, in none of these fours cases can $s_{B'}$ be the reverse
of~$s_{A'}$, showing that $A'$ and~$B'$ are not isomorphic. For instance, in the
second case it would imply that $t$ is~$1$ modulo~$3$ and $a_i=o_2$ if $i$ is
equal to~$1$ modulo~$3$, while $a_i=2$ otherwise; however this
would yield that $s_A=s_B$, a contradiction.  To check the fourth case, it is
useful to consider the value of~$t$ modulo~$3$.

It remains to deal with the case where $\deg_A(r_A)\neq1\neq\deg_B(r_B)$.  We
infer the following expressions.
\[
      s_{A'}=\begin{cases}
            (2,1+a_1,a_2,\dotsc,a_t)&\quad\text{if $o_1=1$ and $o_2=1$},\\
            (o_2,2,1+a_1,a_2,\dotsc,a_t)&\quad\text{if $o_1=1$ and $o_2>1$},\\
            (1+o_1,1+a_1,a_2,\dotsc,a_t)&\quad\text{if $o_1>1$ and $o_2=1$},\\
            (o_2,1+o_1,1+a_1,a_2,\dotsc,a_t)&\quad\text{if $o_1>1$ and $o_2>1$}.\\
      \end{cases}
\]
and
\[
      s_{B'}=\begin{cases}
            (2,1+a_t,a_{t-1},\dotsc,a_1)&\quad\text{if $o_1=1$ and $o_2=1$},\\
            (o_2,2,1+a_t,a_{t-1},\dotsc,a_1)&\quad\text{if $o_1=1$ and $o_2>1$},\\
            (1+o_1,1+a_t,a_{t-1},\dotsc,a_1)&\quad\text{if $o_1>1$ and $o_2=1$},\\
            (o_2,1+o_1,1+a_t,a_{t-1},\dotsc,a_1)&\quad\text{if $o_1>1$ and $o_2>1$}.\\
      \end{cases}
\]
It follows that in none of the four cases the sequence~$s_{B'}$ ie equal
to~$s_{A'}$ or to the reverse of~$s_{A'}$, again relying on the fact that $s_A\neq
s_B$.
\end{proof}

We are
now ready to proceed with the proof of~Theorem~\ref{thm.mmm1}.

\begin{proof}[Proof of Theorem~\ref{thm.mmm1}]      Let~$T$ be a caterpillar. We proceed by induction on the number of vertices of~$T$,
      the theorem being true if~$\abs{T}<4$. We now deal with the inductive step.
      As before, we note that the vector $\alpha(T)=(\alpha_1,\dotsc,\alpha_n)$
      can be computed from~$U_T$, since the coordinates correspond to the
      partitions of~$T$ into two subtrees (each with at least two vertices).
      We prove by induction on~$j\in\{\alpha_1,\dotsc,\lfloor \abs{T}/2\rfloor\}$
      that for every $j$-form~$F$, we can deduce from~$U_T$ the number
      of shapes of~$T$ that belong to~$F$. Observation~\ref{o.init} ensures
      then that we can reconstruct~$T$.  Analogously as in a previous proof
      the number of shapes of~$T$ of size~$\alpha_1$ can be calculated
      from~$U_T$. This number is one or two since $T$ is a caterpillar. 

      We proceed inductively
      and, at each step of the inductive process, we update our knowledge of the
      two ends of~$T$, by increasing the size of our knowledge of (at least) one
      end of~$T$. It is important to note that to know the number of shapes of~$T$
      that belong to a given $j$-form~$F$ for some~$j\ge2$, it is enough to know
      both ends of~$T$ of order~$j$. At any given step, we let~$R_1$ and~$R_2$ be
      the currently known forms of the two ends of~$T$.  Hence after the first
      step $R_1= S_{\alpha_1}$ and $R_2=\varnothing$ or $R_2=R_1$, depending on
      whether~$\U(T,[\abs{T}-\alpha_1,\alpha_1])$ equals~$1$ or~$2$. (As reported
      earlier, this number can be deduced from the $U$-polynomial of~$T$.)

      Let~$j\in\{\a_1+1,\dotsc,\lfloor \abs{T}/2\rfloor\}$.  We assume that for
      each~$j'\in\{\a_1,\dotsc,j-1\}$ and each $j'$-form~$F$ we know the number
      of shapes of~$T$ that belong to~$F$. Let us establish this last statement
      for~$j'=j$. If $j\notin\{\a_{2},\dotsc,\a_n\}$, then we know that the sought
      number is~$0$, by the definition of~$(\a_1,\dotsc,\a_n)$. So we suppose
      now that $j=\a_k$ for some integer~$k\in\{2,\dotsc,n\}$.  We set
      $m\coloneqq\alpha_k- \alpha_{k-1}$. (Recall that this number
      can be deduced from the $U$-polynomial.)  
      Let $\alpha_{k-1}=\abs{R_1}\geq \abs{R_2}$, with~$R_2$ possibly empty. Set $p\coloneqq\a_k- \abs{R_2}$,
      let $R'_1\coloneqq S_m \rightarrow R_1$ and $R'_2\coloneqq S_p \rightarrow R_2$.
      
         If $R_1$ and $R_2$ are r-isomorphic and $\alpha_k= 1$ then we set~$R_1\coloneqq R'_1$
and leave $R_2$ unchanged.  If $R_1$ and $R_2$ are r-isomorphic and $\alpha_k= 2$ then we set~$R_1\coloneqq R'_1$
and~$R_2\coloneqq R'_2$. Hence from now on we assume that~$R_1$ and~$R_2$ are not r-isomorphic.
We distinguish three cases.

\medskip

\textbf{[(1)] Let~$T$ have two $\a_k$-shapes.}

\noindent
Then we update both~$R_1$ and~$R_2$, that is, we set~$R_1\coloneqq R'_1$
and~$R_2\coloneqq R'_2$.

\medskip

    \textbf{[(2)]  Let~$T$ have exactly one $\alpha_k$-shape, \emph{i.e.}, either~$R'_1$
      or~$R'_2$. Moreover let~$R'_1$ and~$R'_2$ be not isomorphic.}

\noindent
We recall that $\a_k\leq \abs T /2$.
As $\abs{R_i'}<\abs{T}$, we know by induction that $U_{R_1'}\neq U_{R_2'}$. Hence
there is an expression~$E'$ of~$\abs{R_1'}=\alpha_k$ such that $r_1\coloneqq
\U(R'_1,E')\neq r_2\coloneqq \U(R_2',E')$. 

Now comes an important observation that will be used repeatedly in this proof:
we know there is only one $\a_k$-shape in $T$, and thus all shaped $\a_k$-partitions of $T$
have to come from partitions where one removes the edge associated to this shape and any subset of edges inside this shape.

Therefore, there is
a unique~$i\in\{1,2\}$ such that $\U_s(T,E')=r_i$ and we can
determine it by Procedure~\ref{proc:1}. We set~$R_i\coloneqq R_i'$ and leave~$R_{3-i}$
unchanged.

\medskip

\textbf{[(3)] Let~$T$ have exactly one $\alpha_k$-shape, \emph{i.e.}, either~$R'_1$
or~$R'_2$. Moreover let~$R'_1$ and~$R'_2$ be isomorphic but not r-isomorphic.}

\noindent
In this case we explicitly know the unique isomorphism class for the  $\alpha_k$-shapes of $T$. Therefore we know, for each 
$\alpha_k$-form~$F$, the number of shapes of~$T$ that are
isomorphic (but not necessarily r-isomorphic) to a member of~$F$.  We
observe that $k<n$. We set~$q\coloneqq \a_{k+1}- \a_k$.  

By Procedure~\ref{proc:1}, we know for each $\alpha_{k+1}$-expression~$E$ the number of
shaped $\a_{k+1}$-partitions of~$T$ with characteristic~$E$.  

\medskip

There are four candidates for an $\a_{k+1}$-shape of~$T$, namely~$S_{1,1}\coloneqq
S_q\rightarrow S_m\rightarrow R_1=S_q\rightarrow R_1'$, $S_{2,1}\coloneqq
S_{q+m}\rightarrow R_1$, $S_{1,2}\coloneqq S_{q+p}\rightarrow R_2$
and~$S_{2,2}\coloneqq S_q\rightarrow S_p\rightarrow R_2=S_q\rightarrow R_2'$. 

We now introduce some labels for the vertices of the stars~$S_q$, $S_{q+m}$
and~$S_{q+p}$. The vertices of~$S_q$ are labelled~$v_1,\dotsc,v_q$, where~$v_q$
is the centre of~$S_q$. By extension, the corresponding vertices of~$S_{1,1}$
and~$S_{2,2}$ inherit those labels. For~$i\in\{p,m\}$, the vertices of~$S_{q+i}$
are labelled~$v_1,\dotsc,v_{q+i}$ where, this time, the labels~$v_1,\dotsc,v_q$
are assigned to leaves only. Similarly, the corresponding vertices of~$S_{1,2}$
and of~$S_{2,1}$ inherit those labels.
Thus, for example, the vertex~$v_q$ of~$S_{1,1}$ is the centre
of the star~$S_q$ and hence the root of~$S_{1,1}$, while the vertex~$v_q$ of~$S_{1,2}$
is one of the leaves of~$S_{q+p}$ and hence is adjacent to the root of~$S_{1,2}$.

\medskip

\textbf{[(3.1)] Let~$T$ have two $\a_{k+1}$-shapes.} 

There are two possibilities for the two $\a_{k+1}$-shapes of~$T$: either~$S_{1,1}, S_{1,2}$ or~$S_{2,1}, S_{2,2}$. We note that
this implies that $\a_{k+1}\leq |T|/2$.
For~$i\in \{1,2\}$, let~$T_i$ be any caterpillar with $|T_i|= |T|$ whose  
$\alpha_{k+1}$-shapes are exactly~$S_{i,1}$ and~$S_{i,2}$.

\begin{observation}\label{obs:new2}\mbox{}
    If~$q> 1$ then~$S_{i,j}$ and~$S_{i',j'}$ are not isomorphic (as unrooted trees) whenever~$i,i',j,j' \in\{1,2\}$ with~$i\neq i'$. 
\end{observation}
\begin{proof}
Comparing the lengths of the spines, the only possible pairs
of isomorphic trees are: $S_{1,1}$ with~$S_{2,2}$, and~$S_{1,2}$ with~$S_{2,1}$.
      However, the fact that $R_1'$ and~$R_2'$
      are isomorphic but not r-isomorphic prevents each of these
pairs to consist of isomorphic trees, using
Observation~\ref{obs:new1}\eqref{it:new1} for the former one.
\end{proof}

Let~$E$ be an expression of $\a_{k+1}$. We note that each $T_i$ has exactly two vertices labelled by $v_q$, namely the root of $S_{i,i}$ and a leaf of $S_{i,3-i}$ attached to the root of $S_{i,3-i}$. 
We classify the shaped $\a_{k+1}$-partitions of~$T_i$, for each~$i \in \{1, 2\}$,
into four classes~$C(E,i,1)$, $C(E,i,2)$, $C(E,i,3)$ and~$C(E,i,4)$. To this end,
let~$\mathcal{E}_i$ be the collections of all shaped $\a_{k+1}$-partitions of~$T_i$ of characteristic~$[|T|-\a_{k+1},E]$.
An element~$P$ of~$\mathcal{E}_i$ \emph{partitions} a subtree~$G$ of~$T_i$ if a subset (possibly of order one)
of the parts of~$P$ forms a partition of~$G$.

\medskip\noindent

(1) We let~$C(E,i,1)$ be the collection of all elements of~$\mathcal{E}_i$ such that a subset of parts of $P$ is 
a partition of the unique $\a_k$-shape of~$T_i$. 

(2) We let~$C(E,i,2)$ be the collection of all elements~$P\in\mathcal{E}_i\setminus C(E,i,1)$
such that $\{v_q\}\subset V(S_{i,i})\cup V(S_{i,3-i})$ is not a part of~$P$.

(3)  We let~$C(E,i,3)$ be the collection of all elements~$P\in\mathcal{E}_i\setminus C(E,i,1)$
such that $\{v_i\}\subseteq V(S_{i,3-i})$ is  a part of~$P$ for each~$i\leq q$.

(4)  We let~$C(E,i,4)$ be the collection of all elements~$P\in\mathcal{E}_i\setminus C(E,i,1)$
such that $\{v_q\}\subseteq V(S_{i,3-i})$ is a part of~$P$ and there exists~$\ell\in\{1,\dotsc,q-1\}$
such that~$\{v_\ell\}\subseteq V(S_{i,3-i})$ is not a part of~$P$.

\medskip\noindent

\begin{observation}\label{o.kk}
Let~$i \in \{1, 2\}$ and let~$E$ be an expression of $\a_{k+1}$.
    
\begin{enumerate}
\item[(1)] The partitions in~$C(E,i,1)$ partition the shape~$S_{i,i}$ of~$T_i$.
      Moreover, there is a bijection~$F$ from~$C(E,1,1)$ to~$C(E,2,1)$ so that
      for each~$P$, there is a bijection between the sets of components of~$P$
        and~$F(P)$ that identifies the class of~$P$ containing the root of the
        $(\a_{k+1})$-shape with the class of~$F(P)$ containing the root of the
        $(\a_{k+1})$-shape.

 \item[(2)]  There is a bijection~$F$ from~$C(E,1,2)$ to~$C(E,2,2)$ so that
     if~$P$ partitions the shape~$S_{i,j}$ of~$T_i$, then~$F(P)$ partitions the
        shape~$S_{3-i,j}$ of~$T_{3-i}$ and there is a bijection between the
        sets of components of~$P$ and~$F(P)$ that identifies the class of~$P$
        containing the root of the $(\a_{k+1})$-shape with the class of~$F(P)$
        containing the root of the $(\a_{k+1})$-shape.

\item[(3)] The partitions in~$C(E,i,3)$ partition the shape~$S_{i,3-i}$ of~$T_i$.
    Moreover, there is a bijection~$F$ from~$C(E,1,3)$ to~$C(E,2,3)$ so that
        for each~$P$, there is a bijection between the sets of components
        of~$P$ and~$F(P)$ that identifies the class of~$P$ containing the root
        of the $(\a_{k+1})$-shape with the class of~$F(P)$ containing the root
        of the $(\a_{k+1})$-shape.

\item[(4)] The partitions in~$C(E,i,4)$ partition the shape~$S_{i,3-i}$ of~$T_i$. 
\end{enumerate}
\end{observation}

\begin{proof}
Items~(2) and~(4) follow directly from the structure of the shapes~$S_{i,j}$.
Items~(1) and~(3) follow from the assumption that~$R'_1$ and~$R'_2$ are isomorphic.
\end{proof}

Let~$S^2\coloneqq S_{q+m-1}\rightarrow R_1$ and~$S^1\coloneqq S_{q+p-1}\rightarrow R_2$. 
We observe that if~$q>1$ then $S^1$ and~$S^2$ are not isomorphic since none of
the pairs~$(R'_1,R'_2)$ and~$(R_1,R_2)$ consists of r-isomorphic trees and, in addition,
$R'_1$ and~$R'_2$ are isomorphic.

\begin{observation}\label{o.kk1}
      Suppose that~$q> 1$ and let~$E$ be an expression of $\a_{k+1}-1$ such that
      $r_1\coloneqq \U(S^1,E)\neq \U(S^2,E)\eqqcolon r_2$.
      Such an expression~$E$ exists by the induction assumption since $\a_{k+1}-1< |T|$.
      Let~$i\in\{1,2\}$ such that $r_i>r_{3-i}$.
      Then $\U_s(T_i, [E, 1])> \U_s(T_{3-i}, [E, 1])$.
\end{observation}
\begin{proof}
      Let~$E'\coloneqq [E, 1]$.
      By Observation~\ref{o.kk} it suffices to show that $|C(E',i,4)|> |C(E',3-i,4)|$, which
      can be argued as follows. 

      We first observe that $|C(E',j,4)|= r_j - |C(E',j,3)|$ for each~$j\in\{1,2\}$.
      Further, $|C(E',1,3)|= |C(E',2,3)|$ by Observation~\ref{o.kk}.
      As we assumed that~$r_i> r_{3-i}$, the observation holds.
\end{proof}

\textbf{[(3.1.1)] Let $q> 1$.} Let~$E$ be the expression from Observation~\ref{o.kk1}. 
We recall that by Procedure~\ref{proc:1}, we know for each $\alpha_{k+1}$-expression~$E$ the number of
shaped $\a_{k+1}$-partitions of~$T$ with characteristic~$E$.  
Hence we know $\U_s(T, [E, 1])$ and also $\U_s(T, [E, 1])\in \{ \U_s(T_1, [E, 1]), \U_s(T_2, [E, 1])\}$, which contains
two values. Hence this case is solved by Observation~\ref{o.kk1}.

\textbf{[(3.1.2)] Let~$q= 1$.} Then~$S_{i,i}$ is isomorphic but not r-isomorphic
to~$S_{3-i,i}$ for each~$i\in \{1, 2\}$, and~$S_{1,1}$ is not isomorphic
to~$S_{2,2}$ since $R'_1$ and~$R'_2$ are not  r-isomorphic. We observe that $k+1< n$ since
$\a_{k+1}\leq |T|/2$ and not all $\a_{k+1}$-shapes of~$T$ are stars.  

We now know all the input data of Procedure~1  for $T$ and $j= \a_{k+2}$ since for each $j'\leq \a_{k+1}$ and for each $j'-$form $F$ the number of shapes $S$ of $T_1$ that are isomorphic to a member of $F$ is equal to  the number of shapes $S$ of $T_2$ that are isomorphic to a member of $F$.

Let $q'\coloneqq
\alpha_{k+2}- \alpha_{k+1}$. There are four candidates for an $\a_{k+2}$-shape
of~$T$, namely $S'_{i,j}= S_{q'}\rightarrow S_{i,j}$ for~$(i,j)\in{\{1, 2\}}^2$. 

\begin{observation}\label{o.kk3}
    The trees~$S'_{i,j}$, for~$(i,j)\in{\{1, 2\}}^2$, are mutually non-isomorphic.
\end{observation}

\begin{proof}
      For~$S'_{1,1}$ and~$S'_{2,2}$, this
      follows from Observation~\ref{obs:new1}.  Moreover, for each~$i\in
      \{1,2\}$, we know that~$S'_{i,i}$ is isomorphic to neither
      of~$S'_{i,3-1}$ and~$S'_{3-i, i}$ because the lengths of the spines
      are different. Finally we consider~$S'_{1,2}$ and~$S'_{2,1}$. We know that the
      rooted caterpillar~$R'_1$ is the reverse of~$R'_2$. Recall the degree sequences
      of caterpillars, defined in the proof of Observation~\ref{obs:new1} on page~\pageref{obs:new1}.
      Let the degree sequence~$s_{R'_1}$ of~$R'_1$ be $(a_1, \dotsc, a_n)$ --- we know that $a_1= m$. Then
      $s_{S'_{2,1}}$ is the sequence~$s_2\coloneqq (a_1, \dotsc, a_n+1,q')$ and
      $s_{S'_{1,2}}$ is the sequence~$s_1\coloneqq (a_n, \dotsc, a_1+1,q')$. We
      observe that if~$s_1= s_2$ or if~$s_1$ is the reverse of $s_2$, then $(a_1,
      \dotsc, a_n)$ is equal to its reverse, which contradicts the assumption that
      $R'_1$ and~$R'_2$ are not r-isomorphic.
\end{proof}

If~$T$ has a unique $\alpha_{k+2}$-shape then we can determine which one of the four mutually
non-isomorphic candidates it is using the induction assumption ($\a_{k+2}< |T|$) and Procedure~\ref{proc:1},
which implies that we know for each $\alpha_{k+2}$-expression~$E$ the number of
shaped $\a_{k+2}$-partitions of~$T$ with characteristic~$E$. Hence, we assume that~$T$ has two $\alpha_{k+2}$-shapes.

There are two possibilities for the two $\a_{k+2}$-shapes of $T$: either $S'_{1,1}, S'_{1,2}$ or $S'_{2,1}, S'_{2,2}$.
For~$i\in \{1,2\}$, let~$T'_i$ be any caterpillar with $|T'_i|= |T|$ whose  
$\alpha_{k+2}$-shapes are exactly~$S'_{i,1}$ and~$S'_{i,2}$.  

\medskip\noindent

Next we proceed analogously as in case~(3.1.1).
Similarly as before,
let us label the vertices of the shape~$S_{q'}$ of~$S'_{i,j}$ by~$u_1,\dotsc,u_{q'}$
for each~$(i,j)\in{\{1,2\}}^2$, where~$u_{q'}$ is the centre of~$S_{q'}$.

Let~$i \in \{1, 2\}$ and let~$E$ be an expression of  $\a_{k+2}$.
We classify the shaped $\a_{k+2}$-partitions of~$T'_i$ into four
classes~$C'(E,i,1)$, $C'(E,i,2)$, $C'(E,i,3)$ and~$C'(E,i,4)$. 
To this end,
let~$\mathcal{E}'_i$ be the collection of all shaped $\a_{k+2}$-partitions of~$T_i$ of characteristic~$[|T|-\a_{k+2},E]$.
An element~$P$ of~$\mathcal{E}'_i$ \emph{partitions} a subtree~$G$ of~$T'_i$ if a subset (possibly of order one)
of the parts of~$P$ forms a partition of~$G$.

\medskip\noindent

(1) We let~$C'(E,i,1)$ be the collection of all elements of~$\mathcal{E}'_i$ such that a subset of parts of $P$ is 
a partition the unique $\a_k$-shape of~$T'_i$. 

(2) We let~$C'(E,i,2)$ be the collection of all elements~$P\in\mathcal{E}'_i\setminus C'(E,i,1)$
such that $\{v_q\}\subseteq V(S_{i,3-i})$ is not a part of~$P$.

(3)  We let~$C'(E,i,3)$ be the collection of all elements~$P\in\mathcal{E}'_i\setminus C'(E,i,1)$
such that $\{v_q\}\subseteq V(S_{i,3-i})$ is a part of~$P$ and~$u_{q'}\in V(S'_{i,3-i})$
does not belong to the same part of~$P$ as the root of~$S_{i,3-i}\subset S'_{i,3-i}$.

(4)  We let~$C'(E,i,4)$ be the collection of all elements~$P\in\mathcal{E}'_i\setminus C'(E,i,1)$
such that $\{v_q\}\subseteq V(S_{i,3-i})$ is a part of~$P$ and~$u_{q'}\in V(S'_{i,3-i})$ belongs
to the same part of~$P$ as the root of~$S_{i,3-i}\subset S'_{i,3-i}$.

\medskip\noindent

\begin{observation}\label{o.kk6}
Let~$i \in \{1, 2\}$ and let~$E$ be an expression of  $\a_{k+2}$.
    
\begin{enumerate}
    \item[(1)] The partitions in~$C'(E,i,1)$ partition the shape~$S'_{i,i}$.  Moreover,
        there is a bijection~$F$ from~$C'(E,1,1)$ to~$C'(E,2,1)$ so that for
        each~$P$, there is a bijection between the sets of components of~$P$
        and~$F(P)$ that identifies the class of~$P$ containing the root of the
        $(\a_{k+2})$-shape with the class of~$F(P)$ containing the root of the
        $(\a_{k+2})$-shape.
    
    \item[(2)]   There is a bijection~$F$ from~$C'(E,1,2)$ to~$C'(E,2,2)$ so
        that if~$P$ partitions the shape~$S_{i,j}$ of~$T_i$, then~$F(P)$
        partitions the shape~$S_{3-i,j}$ of~$T_{3-i}$ and there is a bijection
        between the sets of components of~$P$ and~$F(P)$ that identifies the
        class of~$P$ containing the root of the $(\a_{k+2})$-shape with the
        class of~$F(P)$ containing the root of the $(\a_{k+2})$-shape.

   \item[(3)] The partitions in~$C'(E,i,3)$ partition the shape~$S'_{i,3-i}$. Moreover,
       there is a bijection~$F$ from~$C'(E,1,3)$ to~$C'(E,2,3)$ so that for
        each~$P$, there is a bijection between the sets of components of~$P$
        and~$F(P)$ that identifies the class of~$P$ containing the root of
        the $(\a_{k+2})$-shape with the class of~$F(P)$ containing the root of
        the $(\a_{k+2})$-shape.

 \item[(4)] The partitions in~$C'(E,i,4)$ partition~$S'_{i,3-i}$. 
\end{enumerate}
\end{observation}

\begin{proof}
Analogously as in the proof of Observation~\ref{o.kk}, the Items~(2) and~(4)
follow directly from the structure of the~$S'_{i,j}$.  Items~(1) and~(3) follow
from the assumption that $R'_1$ and~$R'_2$ are isomorphic.
\end{proof}

Let~$Q^1\coloneqq S_{q'}\rightarrow R'_2$ and~$Q^2\coloneqq S_{q'}\rightarrow R'_1$. 
We note that~$Q^1$ and~$Q^2$ are not isomorphic by Observation~\ref{obs:new1}. 

\begin{observation}\label{o.kk5}
Let~$E$ be an expression of~$\a_{k+2}-1$ such that $r_1\coloneqq \U(Q^1,E)\neq \U(Q^2,E)\eqqcolon r_2$.
      Let~$i\in\{1,2\}$ such that $r_i>r_{3-i}$.
      Then $\U_s(T'_i, [E, 1])> \U_s(T'_{3-i}, [E, 1])$.
\end{observation}

\begin{proof}
Set~$E'\coloneqq [E, 1]$. By Observation~\ref{o.kk6} it suffices to show that $|C'(E',i,4)|> |C'(E',3-i,4)|$, which
      can be argued as follows.
      We first observe that  $|C'(E',i,4)|= r_i - |C'(E',i,3)|$ for each~$i\in\{1,2\}$.
      Next, Observation~\ref{o.kk6} implies that $|C'(E',1,3)|= |C(E',2,3)|$.
      Since $r_i> r_{3-i}$, the observation thus holds.
\end{proof}

We recall that by Procedure~\ref{proc:1} we know for each $\alpha_{k+2}$-expression~$E$ the number of
shaped $\a_{k+2}$-partitions of~$T$ with characteristic~$E$.  
Hence we know~$\U_s(T, [E, 1])$ and also $\U_s(T, [E, 1])\in \{ \U_s(T'_1, [E, 1]), \U_s(T'_2, [E, 1])\}$, which consists
of two values. Hence the case~(3.1.2) is solved by Observation~\ref{o.kk5}.

\medskip\noindent

\textbf{[(3.2)] Let~$T$ have a unique $\a_{k+1}$-shape.}

\noindent

Let $q>1$. Using Observation~\ref{obs:new2}, the induction assumption and
Procedure~\ref{proc:1} and considering the shaped $\a_{k+1}$-partitions of~$T$, we can
determine if the unique $\a_{k+1}$-shape of~$T$ is in the set~$\{S_{1,1},
S_{1,2}\}$ or in the set~$\{S_{2,1}, S_{2,2}\}$. In the first case the unique 
  $\a_k$-shape of $T$ is~$R'_1$, in the second case the unique  $\a_k$-shape of~$T$ is~$R'_2$.

So suppose that $q= 1$. There are two pairs of isomorphic
candidates: $S_{1,1}$ is isomorphic to~$S_{2,1}$ and~$S_{1,2}$
is isomorphic to~$S_{2,2}$. We observe that for each pair, its two elements differ in the number
of leaves different from the root. Moreover, $S_{1,1}$ and~$S_{2,2}$ are not
isomorphic.  By considering the shaped $\a_{k+1}$-partitions of~$T$ we can determine
to which pair the unique $\a_{k+1}$-shape of~$T$ belongs.  We may assume, without
loss of generality, that it belongs to~$\{S_{1,1}, S_{2,1}\}$.
We now show that we can determine the number of leaves of the unique
$\a_{k+1}$-shape of~$T$ different from the root and therefore determine whether
the correct shape is~$\{S_{1,1}$ or~$S_{2,1}\}$.

We observe that $n\neq k+1$ since $q= 1$.  Since we know the isomorphism class
of the unique $\a_{k+1}$-shape of~$T$, we can determine the number of shaped
$\a_{k+2}$-partitions of~$T$ by Procedure~\ref{proc:1}.

We have
\[
\theta(T,\abs{T}-\a_{k+1}-1, \a_{k+1}, 1)= \U_s(T, \a_{k+1}, 1)+ d(T, \a_{k+1}, 1),
\]
where~$d(T, \a_{k+1}, 1)$ is
equal to the number of leaves of~$T$ outside of the unique $\a_{k+1}$-shape.
The considerations above imply that we can determine~$d(T, \a_{k+1}, 1)$. Since
we know the number of leaves of~$T$, we can also determine the number of leaves of
the  unique $\a_{k+1}$-shape of~$T$ that are different from the root. Hence we can
determine whether this shape is~$S_{1,1}$ or~$S_{1,2}$.  This finishes case~(3.2)
and thus case~(3).  

This ends our updating process and the inductive step of our induction.
Consequently, we established that we know, for
each~$j\in\{\a_1,\dotsc,\abs{T}/2\}$ and each~$j$-form~$F$, the number of shapes
of~$T$ that belongs to~$F$. Therefore Observation~\ref{o.init} ensures that we
know~$T$. This concludes the induction on the size of~$T$ and thus the proof of
Theorem~\ref{thm.mmm1}.
\end{proof}

\section{Designing Procedure~\ref{proc:1}}\label{sec:proc1} 
A \emph{$j$-situation $\sigma$} is
a sequence~$((\sigma_1,w_1),\dotsc,(\sigma_{t(\sigma)},w_{t(\sigma)}))$ of
the representatives of isomorphism classes of weighted non-rooted trees such that $t(\sigma)\ge2$, 
$\sum_{i=1}^{t(\sigma)}w_i(\sigma_i)=j$ and there are numbers $p(\sigma), q(0), q(1), \dotsc, q(p(\sigma))$ such that 

\begin{enumerate}
    \item $1\leq p(\sigma)\leq t(\sigma)$ and
$0= q(0)< q(1)< \dotsb < q(p(\sigma))= t(\sigma)$;
\item for each~$i\in\{0, \dotsc, p(\sigma)-1\}$ the weighted trees 
$(\sigma_{q(i)+s}, w_{q(i)+s})$ for~$s\in\{1, \dotsc, q(i+1)- q(i)\}$ are 
the same; and
\item if $k\notin \{q(i)+1, \dotsc, q(i+1)\}$ then the weighted tree~$(\sigma_k, w_k)$ is not isomorphic to $(\sigma_{q(i)+1}. w_{q(i)+1})$.
\end{enumerate}

A $j$-situation $\sigma$
is said to \emph{occur} in a tree~$T$ if there exists a subtree~$T'$ of~$T$ and~$t(\sigma)$ distinct edges~$e_1,\dotsc,e_{t(\sigma)}$ with exactly one end
in~$V(T')$ such that, for each~$i\in\{1,\dotsc,t(\sigma)\}$, there is an
isomorphism (thus preserving the weights \emph{but not necessarily the roots})
between~$\sigma_i$ and the component of~$T-e_i$ not containing~$T'$. Note that if
$\sigma$ occurs in~$T$, then for each~$i\in\{1,\dotsc,t(\sigma)\}$ such that
$\sigma_i$ is not a single vertex the tree~$T$ has a shape isomorphic (but not necessarily r-isomorphic)
to~$\sigma_i$.

We proceed in two steps, the first one being an exhaustive listing that depends
only on~$j$.

\noindent
\textbf{Step 1.} Explicitly list all $j$-situations for~$j\leq \alpha_l$.

\noindent
\textbf{Step 2.} For each $j$-situation~$\sigma$ from Step~1,
compute the number~$m_T(\sigma)$ of times $\sigma$ occurs in~$T$.

Before designing Step 2, we show how Steps~1 and~2 accomplish
Procedure~\ref{proc:1}.  Suppose that the two steps are completed.
Let~$E=\{w(T)- j,E_1,\dotsc,E_k\}$ be a $j$-expression of~$w(T)$.

For each $j$-situation~$\sigma=((\sigma_1,w_1),\dotsc,(\sigma_{t(\sigma)},w_{t(\sigma)}))$, let~$\Psi_{\sigma}$ be the collection of all surjections from the expression~$\{E_1,\dotsc,E_k\}$ to~$\{\sigma_1,\dotsc,\sigma_{t(\sigma)}\}$.
Two elements~$f$ and~$g$ of~$\Psi_{\sigma}$ are \emph{equivalent} if
the multi-set~$f^{-1}(\sigma_i)$ is equal to the multi-set~$g^{-1}(\sigma_i)$
for every~$i\in\{1,\dotsc,t(\sigma)\}$. We consider the equivalence classes for this relation
on~$\Psi_{\sigma}$ and we form~$\Psi_{\sigma}'$ by arbitrarily choosing one representative
in each equivalent class.
We observe
that the number~$X$ of non-shaped $j$-partitions of~$T$ with characteristic~$E$ is
\begin{equation}
    {[p(\sigma)!\prod_{1\le i\leq p(\sigma)}(q(i)- q(i-1))!]}^{-1}\sum_{\text{$j$-situation
$\sigma$}}m_T(\sigma)\sum_{f\in\Psi_{\sigma}'}
\sum_{i=1}^{t(\sigma)}\U(\sigma_i,w_i,f^{-1}(\sigma_i)),\label{eq:nshaped}
\end{equation}
where the multi-set~$f^{-1}(\sigma_i)$ is naturally interpreted as an expression.  Indeed,
a non-shaped partition of~$T$ with characteristic~$E$ corresponds precisely to
the occurrence of some
$j$-situation~$\sigma=((\sigma_1,w_1),\dotsc,(\sigma_{t(\sigma)},w_{t(\sigma)}))$
where the trees $\sigma_1\,\dotsc,\sigma_{t(\sigma)}$ are also partitioned (possibly
trivially).  Recalling that $\U(\sigma_i,w_i,E')$ is zero if $E'$ is not an
expression of~$w_i(\sigma_i)$, the formula~\eqref{eq:nshaped} follows.  Notice
that~\eqref{eq:nshaped} does allow us to compute~$X$ when Step~1 and Step~2 are completed.
Consequently, we can compute
the number of shaped $j$-partitions of~$T$ with characteristic~$E$,
which is
\[
\U(T,w,E)-X.
\]
This accomplishes Procedure~\ref{proc:1}.

It remains to design Step~2. To this end, we fix
a~$j$-situation~$\sigma=((\sigma_1,w_1),\dotsc,(\sigma_{t},w_{t}))$.
Define~$\Lambda$ to be the set of all sequences~$(T_1,\dotsc,T_t)$ such that
for each~$i\in\{1,\dotsc,t\}$,
\begin{itemize}
    \item $T_i$ is either a shape of~$T$ or a leaf;
    \item $T_i$ is isomorphic to~$(\sigma_i,w_i)$ as a weighted non-rooted tree; and
    \item if $k\in\{1,\dotsc,t\}\setminus\{i\}$, then $T_i$ is not a subtree of~$T_k$.
\end{itemize}

\begin{observation}\label{obs:Lambda}
   The number of times that $\sigma$ occurs in~$T$ is equal to
   $\abs{\Lambda}$.
\end{observation}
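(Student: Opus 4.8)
The plan is to exhibit a bijection between the occurrences of $\sigma$ in $T$ and the tuples in $\Lambda$, recording an occurrence by its tuple of branches $(T_1,\ldots,T_t)$, where $T_i$ is the component of $T-e_i$ distinct from the witnessing subtree $T'$. Since a shape of $T$ determines the unique edge of $T$ having exactly one endpoint in it, the edges $e_i$ are recoverable from the $T_i$, so working at the level of shape-tuples loses no information.

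First I would check that every occurrence yields an element of $\Lambda$. By definition each $T_i$ is a shape of $T$ isomorphic to $(\sigma_i,w_i)$, so only the condition $T_i\not\subseteq T_j$ for $i\neq j$ needs verifying. Here I would use that $T$ is a tree together with the fact that $e_i$ and $e_j$ each have an endpoint in $T'$: if $T_i\subseteq T_j$, then the endpoint of $e_i$ lying in $T_i$ is in $T_j$, while its other endpoint lies in $T'$ and hence outside $T_j$; thus $e_i$ is an edge with exactly one endpoint in $T_j$, and since $T_j$ is the component of $T-e_j$ distinct from $T'$, this forces $e_i=e_j$, contradicting the requirement that $e_1,\ldots,e_t$ be distinct.

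Conversely, given $(T_1,\ldots,T_t)\in\Lambda$ I would construct a witnessing subtree and recover the occurrence. The key point is pairwise disjointness: by Observation~\ref{obs:step2} any two shapes $T_i,T_j$ satisfy $T_i\subseteq T_j$, $T_j\subseteq T_i$, or $T_i\cap T_j=\varnothing$, and the defining condition of $\Lambda$ rules out the first two, so the $T_i$ are pairwise disjoint. Letting $e_i$ be the unique edge of $T$ with exactly one endpoint in $T_i$ (its outer endpoint being $u_i$), I would set $T'\coloneqq T-\bigcup_i V(T_i)$ and verify it is a valid witness. This requires: (a) $u_i\notin T_j$ for every $j\neq i$, for otherwise $e_i$ would have exactly one endpoint in $T_j$, forcing $e_i=e_j$ and making $T_i,T_j$ the two components of $T-e_i$, whence $w(T_i)+w(T_j)=w(T)$, contradicting $w(T_i)+w(T_j)\le\alpha_j\le w(T)/2$; (b) $T'$ is connected, since the unique path in $T$ between two of its vertices cannot enter a branch $T_i$, as $T_i$ is attached to the rest of $T$ only through the single edge $e_i$; and (c) $T'$ is nonempty, as the removed weight is at most $\alpha_j<w(T)$. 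Then each $e_i$ has exactly one endpoint in $T'$ and $T_i$ is the component of $T-e_i$ distinct from $T'$, so $\sigma$ occurs.

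Since the edges $e_i$ are recovered canonically as the unique edges of $T$ with exactly one endpoint in the respective $T_i$, and the witness $T'$ is the complement of the branches, the two constructions are mutually inverse, giving the claimed equality $m_T(\sigma)=\abs{\Lambda}$. I expect the converse direction to be the main obstacle: establishing pairwise disjointness of the branches (the one place where Observation~\ref{obs:step2} is essential) and ruling out $u_i\in T_j$ via the weight bound $\alpha_j\le w(T)/2$ are the crucial steps, while the connectivity and nonemptiness of $T'$ are routine tree arguments.
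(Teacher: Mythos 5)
Your proof is correct and follows essentially the same route as the paper's: occurrences are recorded by their tuples of branches, pairwise disjointness of the $T_i$ comes from Observation~\ref{obs:step2} together with the non-containment condition defining $\Lambda$, the bound $\alpha_j\le w(T)/2$ keeps the outer endpoints $u_i$ outside all branches, and the witness $T'$ is the complement of the branches. You actually fill in two details the paper leaves implicit (the forward-direction check that an occurrence satisfies the non-containment condition, and the direct path argument for connectivity of $T'$, which the paper handles by iteratively deleting shapes), so nothing further is needed.
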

\begin{proof}
We prove that the elements of~$\Lambda$ are exactly occurrences of~$\sigma$ in~$T$.
By the definition, each occurrence of~$\sigma$ gives rise to an element
of~$\Lambda$.

Conversely, let~$(T_1,\dotsc,T_t)$ be an element of~$\Lambda$.
Observation~\ref{obs:step2} implies that the trees~$T_i$ are mutually
disjoint. For each~$i\in\{1,\dotsc,t\}$, let~$e_i$ be the edge of~$T$
associated to the shape~$T_i$, that is, $e_i$ connects $T_i$ to~$T-T_i$;
and let~$v_i$ be the endvertex of~$e_i$ that does
not belong to~$T_i$. Note that $v_i\notin\cup_{k=1}^{t}T_k$ since no tree~$T_i$
is a subtree of another tree~$T_k$ and $j\le \alpha_l < w(T)$.  Set~$T_0'\coloneqq
T$ and~$T_i'\coloneqq T_{i-1}'-T_i$ for~$i\ge1$.

Observe that each of~$T_{i+1},\dotsc,T_t$ is either a leaf or a shape of~$T_i'$.
Hence $T_i'$ is connected and contains all the vertices~$v_1,\dotsc,v_t$.  Therefore
setting~$T'\coloneqq T_{t}'$ shows that $(T_1,\dotsc,T_t)$ occurs in~$T$.
\end{proof}

Our goal is to compute~$\abs{\Lambda}$. For a weighted tree~$(T',w')$,
define~$\Lambda_0(T',w')$ to be the set of all sequences~$(T_1,\dotsc,T_t)$ such
that for each~$i\in\{1,\dotsc,t\}$ it holds that
$T_i$ is either a leaf or a shape of~$T'$ that is isomorphic to~$(\sigma_i,w_i)$ as a weighted non-rooted tree.
Set~$\Lambda_0\coloneqq\Lambda_0(T,w)$. In this
notation, the weight shall be omitted when there is no risk of confusion.  The
advantage of~$\Lambda_0$ is that its size can be computed. Indeed,
\[
\abs{\Lambda_0}=\prod_{i=1}^t \s{(\sigma_i,w_i)}{(T,w)},
\]
where $\s{(\sigma_i,w_i)}{(T,w)}$ is the number of leaves or shapes of~$T$ that are
isomorphic to~$(\sigma_i,w_i)$ as weighted non-rooted trees. This number is given in the input of
Procedure~\ref{proc:1}, since $w_i(\sigma_i)< j$.

Next, we compute~$\abs{\Lambda}$ using the principle of inclusion and
exclusion.  Setting~$I\coloneqq{\{1,\dotsc,t\}}^2\setminus\sst{(i,i)}{1\le i\le
t}$, we have
\[
   \abs{\Lambda}=\abs{\Lambda_0}-\abs{\bigcup_{(i,k)\in
   I}\Lambda_{(i,k)}},
\]
where $\Lambda_{(i,k)}$ is the subset of~$\Lambda_0$ composed of the elements~$(T_1,\dotsc,T_t)$ with $T_i\subseteq T_k$.

By the principle of inclusion-exclusion, we deduce that the output of Step~$2$
is equal to
\[
\abs{\Lambda_0}-\sum_{\varnothing\neq J\subseteq I}
{(-1)}^{\abs{J}-1}\abs{\bigcap_{(i,k)\in J}\Lambda_{(i,k)}}.
\]

It remains to compute~$\abs{\bigcap_{(i,k)\in J}\Lambda_{(i,k)}}$ for each
non-empty subset~$J$ of~$I$.  We start with an observation, which
characterises the sets~$J$ for which the considered intersection is not empty.

\begin{observation}\label{obs:J}
    Let $J\subseteq I$. Then, $\bigcap_{(i,k)\in
    J}\Lambda_{(i,k)}\neq\varnothing$ if and only if for every $(i,k)\in J$,
    either $\sigma_i$ is isomorphic to~$\sigma_k$, or $\sigma_k$ has a leaf or a shape
    that is isomorphic to~$\sigma_i$ as a weighted non-rooted tree.
\end{observation}

From now on, we consider an arbitrary contributing set~$J$.  We construct four
directed graphs~$A_0$, $A_1$, $A_2$ and~$A_3$ that depend on~$J$.  Each vertex~$x$
of~$A_l$ is labeled by a subset~$\ell(x)$
of~$\{(\sigma_1,w_1),\dotsc,(\sigma_t,w_t)\}$. These labels will have the
following properties.
\begin{enumerate}
    \item ${(\ell(x))}_{x\in V(A_l)}$ is a partition of~$\{(\sigma_1,w_1),\dotsc,(\sigma_t,w_t)\}$.
    \item For each vertex~$x$ of~$A_l$, all weighted trees in~$\ell(x)$ are
       isomorphic.
    \item $\abs{\cap_{(i,k)\in J}\Lambda_{(i,k)}}$ is equal to the number of
       elements~$(T_1,\dotsc,T_t)$ of~$\Lambda_0$ such that
    \begin{itemize}
        \item for each vertex~$x$ of~$A_l$, if
           $(\sigma_i,w_i),(\sigma_k,w_k)\in\ell(x)$ then $T_i=T_k$; and
        \item for every arc~$(x,y)$ of~$A_l$, if
           $((\sigma_i,w_i),(\sigma_k,w_k))\in\ell(x)\times\ell(y)$, then
           $T_i\subseteq T_k$.
    \end{itemize}
\end{enumerate}

The directed graph~$A_0$ is obtained as follows. We start from the vertex
set~$\{z_1,\dotsc,z_t\}$. For each~$i\in\{1,\dotsc,t\}$, the label~$\ell(z_i)$
of~$z_i$ is set to be~$\{(\sigma_i,w_i)\}$.  For each~$(i,k)\in J$, we add an arc
from~$z_i$ to~$z_k$. Thus $A_0$ satisfies properties~$(1)$--$(3)$.  Note that
$A_0$ may contain directed cycles, but by Observation~\ref{obs:J}, if $C$ is
a directed cycle then all elements in~$\cup_{x\in V(C)}\ell(x)$ are isomorphic.

Now, $A_1$ is obtained from~$A_0$ by the following recursive operation.  Let~$(x,y,z)$ be
a triple of vertices such that $(x,y)$ and~$(x,z)$ are arcs, but neither~$(y,z)$
nor~$(z,y)$ are arcs.  Let~$(\sigma_y,w_y)\in\ell(y)$
and~$(\sigma_z,w_z)\in\ell(z)$. We add the arc~$(y,z)$ if
$\abs{V(\sigma_y)}\le\abs{V(\sigma_z)}$, and the arc~$(z,y)$ if
$\abs{V(\sigma_z)}\le \abs{V(\sigma_y)}$. (In particular, if
$\abs{V(\sigma_y)}=\abs{V(\sigma_z)}$, then both arcs are added.)

We observe that $A_1$ satisfies~$(1)$--$(3)$. Since neither the vertices nor the
labels were changed, the only thing that we need to show is that if the
arc~$(y,z)$ was added, then for all sequences $(T_1,\dotsc,T_t)\in\cap_{(i,j)\in
J}\Lambda_{(i,j)}$ and
all~$((\sigma_i,w_i),(\sigma_k,w_k))\in\ell(y)\times\ell(z)$, it holds that
$T_i\subseteq T_k$.  This follows from Observation~\ref{obs:step2}: since $(y,z)$
was added, there exists~$s\in\{1,\dotsc,t\}$ such that $T_s$ is contained in
both~$T_i$ and~$T_k$.

The directed graph~$A_2$ is obtained from~$A_1$ by recursively contracting all
directed cycles of~$A_1$. Specifically, for each directed cycle~$C$, all the
vertices of~$C$ are contracted into a vertex~$z_C$ (parallel arcs are removed,
but not directed cycles of length~$2$), and $\ell(z_C)\coloneqq\cup_{x\in
V(C)}\ell(x)$.  We again observe that $A_2$ satisfies properties~$(1)$--$(3)$.

Finally, $A_3$ is obtained from~$A_2$ by recursively deleting transitivity
arcs, that is, the arc~$(y,z)$ is removed if there exists a directed path of
length greater than~$1$ from~$y$ to~$z$.  Note that~$A_2$ and~$A_3$ have the
same vertex-set, and every arc of~$A_3$ is also an arc in~$A_2$.  Again, $A_3$
readily satisfies properties~$(1)$--$(3)$.

Now, let us prove that each component of~$A_3$ is an arborescence, that is
a directed acyclic graph with each out-degree at most one.  We only need to
show that every vertex of~$A_3$ has outdegree at most~$1$.  Assume that $(x,y)$
and~$(x,z)$ are two arcs of~$A_3$. First, note that, in~$A_2$, there is no
directed path from~$y$ to~$z$ or from~$z$ to~$y$, for otherwise the arc~$(x,y)$
or the arc~$(x,z)$ would not belong to~$A_3$, respectively. Therefore,
regardless of whether~$y$ and~$z$ arose from contractions of directed cycles
in~$A_1$, there exist three vertices~$x'$, $y'$ and~$z'$ in~$A_1$ such that
both~$(x',y')$ and~$(x',z')$ are arcs but neither~$(y',z')$ nor~$(z',y')$ is an
arc.  This contradicts the definition of~$A_1$.  Consequently, every vertex
of~$A_3$ has outdegree at most~$1$, as wanted.

We define~$\tau_i$ to be the sequence
\[
      (\s{(\sigma_{i},w_i)}{(T,w)},\s{(\sigma_{i},w_i)}{(\sigma_1,w_1)},\dotsc,\s{(\sigma_{i},w_i)}{(\sigma_t,w_t)})
\]
We recall that $\tau_1,\dotsc,\tau_t$ are known from the assumptions
of Procedure~\ref{proc:1}. Step~$2$ is completed by the following procedure.
\begin{procedure}\label{proc:3}\mbox{}\\
\textsc{input:} A labeled directed forest~$A$ of arborescences and the
sequences $\tau_1,\dotsc,\tau_t$.\\
\textsc{output:} For each~$H\in\{(T,w),(\sigma_1,w_1),\dotsc,(\sigma_t,w_t)\}$,
the number~$\mathcal{P}_3(H,A,\tau(T))$
of elements $(T_1,\dotsc,T_t)$ of~$\Lambda_0(H)$ such that
        \begin{itemize}
        \item for each vertex~$x$ of~$A$, if
        $(\sigma_i,w_i),(\sigma_k,w_k)\in\ell(x)$
        then $T_i=T_k$; and
        \item for every arc~$(x,y)$ of~$A$, if
        $((\sigma_i,w_i),(\sigma_k,w_k))\in\ell(x)\times\ell(y)$, then
        $T_i\subseteq T_k$.
        \end{itemize}
\end{procedure}

The output of Procedure~\ref{proc:3} can be recursively computed as follows.
Let~$V_{\max}$ be the set of vertices of~$A$ with outdegree~$0$.  For each
vertex~$x$ of~$A$, let~$(\sigma^x,w^x)$ be a representative of~$\ell(x)$.
\[
\mathcal{P}_3(H,A,\tau(T))=\negthickspace\negthickspace\prod_{x\in
V_{\max}}\negthickspace\negthickspace\left(\s{(\sigma^x,w^x)}{H}\right)\cdot
\mathcal{P}_3((\sigma^x,w^x),\tilde{A}(w),\tau(T)),
\]
where $\tilde{A}(w)$ is obtained from the component of~$A$ that contains~$x$ by removing~$x$.

By property~(3) of the labels, the output~$\mathcal{P}_3(T,A_3,\tau(T))$ is equal
to~$\abs{\cap_{(i,k)\in J}\Lambda_{(i,k)}}$. This concludes the design of
Procedure~\ref{proc:1}.

\medskip
\noindent
\textbf{Acknowledgements.} The authors thank the referees for their numerous
suggestions on a preliminary version of this work, including pointing out some inaccuracies.


\end{document}